\theoremstyle{plain}
\newtheorem{theorem}{Theorem}
\newtheorem{corollary}[theorem]{Corollary}
\newtheorem{lemma}[theorem]{Lemma}
\theoremstyle{definition}
\newtheorem{definition}[theorem]{Definition}
\newtheorem{example}[theorem]{Example}
\newtheorem{remark}[theorem]{Remark}
\newcommand{\N}{{\mathbb N}}
\newcommand{\Ha}{{\mathcal H}}
\newcommand{\D}{{\mathcal D}}
\newcommand{\Ds}{\mathscr{D}}
\newcommand{\B}{{\mathcal B}}
\newcommand{\E}{{\mathcal E}}
\title[A New Approach to the $r$-Whitney Numbers]{A New Approach to the $r$-Whitney Numbers by Using Combinatorial Differential Calculus}
\author{Miguel A.  M\'endez}
\address{\noindent Departamento de Matem\'aticas, Facultad de Ciencias, Universidad Antono Nari\~no, Bogot\'a, Colombia}
\email{mmendezenator@gmail.com}
\author{Jos\'e L. Ram\'{\i}rez}
\address{\noindent Departamento de Matem\'aticas, UNAL, Bogot\'a,  COLOMBIA}
\email{jolura1@gmail.com}
\date{\today}
\subjclass[2010]{Primary  	11B83; Secondary 11B73, 05A15, 05A19.}
\keywords{Differential operators, $r$-Whitney number, $r$-Dowling polynomial.}
\begin{document}
\begin{abstract}
In the present article  we introduce two new combinatorial interpretations of the $r$-Whitney numbers of the second kind obtained from the combinatorics of the differential operators associated to the grammar $G:=\{ y\rightarrow yx^{m}, x\rightarrow x\}$. By specializing $m=1$ we obtain  also a new combinatorial interpretation of the $r$-Stirling numbers of the second kind. Again, by specializing to the case $r=0$ we introduce a new generalization of the Stirling number of the second kind and through them a binomial type family of polynomials that generalizes Touchard's. Moreover, we show several well-known identities involving the $r$-Dowling polynomials and the $r$-Whitney numbers using the combinatorial  differential calculus. Finally we prove that the $r$-Dowling polynomials are a Sheffer family relative to the generalized Touchard binomial family,  study their umbral inverses, and introduce $[m]$-Stirling numbers of the first kind.   From the relation between umbral calculus and the Riordan matrices we give several new combinatorial identities involving the $r$-Whitney number of both kinds, Bernoulli  and Euler polynomials.
\end{abstract}

\maketitle

\section{Introduction}
The $r$-Whitney numbers of the second kind $W_{m,r}(n,k)$ were defined by Mez\H{o} \cite{Mezo} as the connecting coefficients between some particular polynomials.  For non-negative integers $n, k$ and $r$ with $n\geq k \geq 0$ and for any integer $m>0$
\begin{align*}
(mx+r)^n=\sum_{k=0}^{n}m^kW_{m,r}(n,k)x^{\underline{k}},
\end{align*}
where $x^{\underline{n}}=x(x-1)\cdots (x-n+1)$ for $n\geq  1$ and $x^{\underline{0}}=1$. \\

Cheon and Jung \cite{CJ} showed that the numbers $W_{m,r}(n,k)$ are related to the Dowling lattices as follows.   Let $Q_n(G)$ be the Dowling lattice of rank $n$, where $G$ is a finite group of order $m$. The coefficient of $r^s$ of the polynomial $\sum_{k=0}^n W_{m,r}(n,k)$ is equal to the number of elements of $Q_n(G)$ containing $n-s$ distinct unit functions. This sequence generalizes the Whitney numbers of the second kind, $W_{m}(n,k)=W_{m,1}(n,k)$, that count the total number of elements of corank $k$ in $Q_n(G)$ \cite{Dowling}.\\

Mihoubi and Rahmani \cite{MR}  found an interesting combinatorial interpretation for the $r$-Whitney numbers of the second kind by using colored set partitions. Recall that a partition of a set $A$ is a class of disjoint subsets of $A$ such that the union of them covers $A$. The subsets are called blocks.  Let $r,n\ge0$ be integers, and let $A_{n,r}$ be the set defined by
\[A_{n,r}:=\{1,2,\dots,r,r+1,\dots,n+r\}.\]
The elements $1,2,\dots,r$ will be called special elements by us. A block of a partition of the above set is called special if it contains special elements. Then $W_{m,r}(n,k)$ counts  the number of the set partitions of $A_{n,r}$ in $k+r$ blocks such that the elements $1,2,\dots,r$ are in distinct blocks (i.e., any special block contains exactly one special element). All the elements but the last one in non-special blocks are coloured with one of $m$ colours independently and neither the elements in the special blocks nor the special blocks are coloured.

The $r$-Whitney numbers satisfy the following recurrence relation \cite{Mezo}
\begin{align*}
W_{m,r}(n,k)&=W_{m,r}(n-1,k-1) + (km+r)W_{m,r}(n-1,k).
\end{align*}

Moreover, these numbers have the following exponential  generating function \cite{Mezo}:
\begin{align}\label{gfunW2}
\sum_{n=k}^\infty W_{m,r}(n,k)\frac{z^n}{n!}&=\frac{e^{rz}}{k!}\left(\frac{e^{mz}-1}{m}\right)^k.
\end{align}

Note that if $(m,r)=(1,0)$ we obtain  the Stirling numbers of the second kind, if  $(m,r)=(1,r)$ we have the $r$-Stirling (or noncentral Stirling) numbers \cite{Broder}, and if $(m,r)=(m,1)$ we have the  Whitney numbers \cite{Ben2,Ben}.   Many properties of the $r$-Whitney numbers and their connections to elementary symmetric functions, matrix theory and special polynomials, combinatorial identities and generalizations can be found in   \cite{CC3, MRS, Merca, Merca3, Mezo, Ram, Ram2, MT, Xu2}.  \\

The combinatorial differential calculus was introduced by Joyal in the framework of combinatorial species (see for example \cite{Joyal, BLL,LV}).  However, by using directly exponential formal power series and the combinatorics of their coefficients (see \cite{difmend}), the fundamentals of the approach can be explained without the use of the categorical framework involved in the theory of species.  The combinatorial differential calculus is closely related to Chen context-free grammar method \cite{Chen}. However, they are not equivalent. Differential operators that are not derivations can be combinatorially interpreted, and algebraic formulas obtained from that interpretation. Only derivations have a counterpart in the context of Chen grammars. The substitution rules in a context-free grammar can be translated into a differential operator. By the iterated application of the combinatorial version of it, the associated combinatorial objects emerge, and not infrequently, in a simple and natural way. \\

Using the combinatorial differential calculus we present a new combinatorial interpretation to the  $r$-Whitney numbers of the second kind. As a special case we get a new combinatorial interpretation for the $r$-Stirling numbers of the second kind and introduce the $[m]$-Stirling numbers of the second kind, $S^{[m]}(n,k)$. Through them we define the $[m]$-Touchard polynomials, that generalize the classical family and are also of binomial type. Their umbral inverse gives us the $[m]$-Stirling numbers of the first kind. In a forthcoming paper we will show that the combinatorial interpretation of the $r$-Whitney numbers (and of the generalization of the Stirling numbers) of the second kind  we present here are very natural. They  directly count the generalized Whitney numbers of ranked posets, and by M\"obius inversion we get the Whitney numbers of the first kind.\\

By using classical results of umbral calculus we get that the $r$-Dowling polynomials are of Sheffer type relative to the $[m]$-Touchard binomial family. Finally, from the relation between umbral calculus and the Riordan matrices we give several new combinatorial identities involving the $r$-Whitney number of both kinds, Bernoulli polynomials and Euler polynomials.

\section{New combinatorial models for $r$-Whitney, $r$-Stirling, and new $[m]$-Stirling numbers}

Chen \cite{Chen} introduced a combinatorial method by means of context-free grammar to  study exponential structures. A context-free grammar $G$ over an alphabet $X$, whose symbols are commutative indeterminates, is a set of productions or substitutions rules that replace a symbol of $X$ by a formal function (formal power series) in the set of indeterminates $X$,
\begin{equation*}
x\rightarrow \phi_x(X),\; x\in A.
\end{equation*}
Here $A$ is a subset of the alphabet $X$.
  The formal derivative $D$ is a linear operator defined with respect to a context-free grammar $G$ such that for a letter $x\in A$ acts by substitution by $\phi(X)$, and it is extended recursively as a derivation. For any formal functions $u$ and $v$ we have:
$$D(u+v)=D(u)+D(v), 	\hspace{1cm} D(uv)=D(u)v + uD(v), \hspace{1cm}  D(f(u))=\frac{\partial f(u)}{\partial u}D(u),$$
where $f(x)$ is a formal power series.   For more applications of the context-free grammar method see for example \cite{CMM, CF, HWY, Ma1, Ma1a, Ma2}.

The grammar formal derivative $D$ can be equivalently written as a derivation in the algebra of formal power series over a ring $\mathcal{R}$ containing $\mathbb{Q}$,  $\mathcal{R}[[X]]$.
\begin{equation*}
\Ds =\sum_{x\in A}\phi_x(X)\partial_{x}.
\end{equation*}
For $A$ infinite, we have to make a summability assumption over the family of formal power series  $\{\phi_x(X)\}_{x\in A}$.  The variables in $X$ are thought of as colors. Each operator $\phi_x(X)\partial_x$ is combinatorially interpreted as a corolla having as root a ghost vertex of color $x$ and weighted with the coefficients of the series $\phi(X)$ according with the distribution of colors of the leaves in the corolla. A formal power series $F(X)$ are also represented by corollas in a similar way but without the ghost vertex.  The combinatorial representation of the operator acting over a series $F(X)$ is obtained by dropping over the combinatorial representation of $F(X)$ corollas whose ghost vertices replace vertices of the same color on corollas representing $F$ (see Figure \ref{fig:corollaop}). Details and proofs of why this combinatorial approach works can be seen in \cite{difmend}.

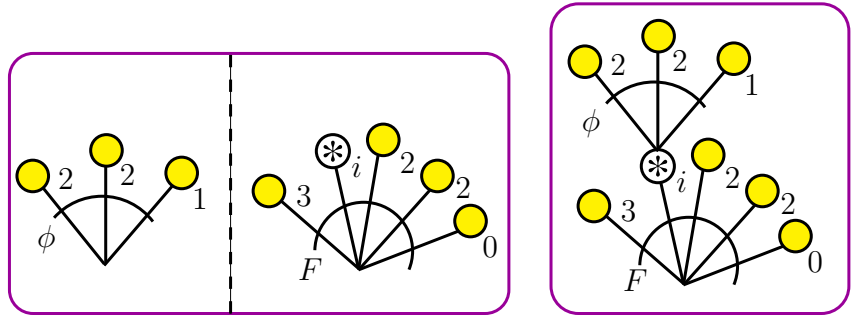
\begin{figure}[H]
	\begin{center}
		\scalebox{1} 
		{\begin{pspicture}(0,-2.0200002)(11.2,2.14)
			\definecolor{color2111557}{rgb}{0.6,0.0,0.6}
			\pscircle[linewidth=0.04,dimen=outer,fillstyle=solid,fillcolor=yellow](0.33,-0.17000015){0.23}
			\pscircle[linewidth=0.04,dimen=outer,fillstyle=solid,fillcolor=yellow](2.31,-0.13000014){0.23}
			\psarc[linewidth=0.04](1.25,-1.2900001){0.85}{37.146687}{138.46822}
			\psline[linewidth=0.04,fillstyle=solid,fillcolor=black](2.1809757,-0.30878055)(1.28,-1.3600001)
			\psline[linewidth=0.04,fillstyle=solid,fillcolor=black](1.3,-0.03999995)(1.3,-1.32)
			\psline[linewidth=0.04,fillstyle=solid,fillcolor=black](0.46,-0.32000005)(1.28,-1.34)
			\pscircle[linewidth=0.04,dimen=outer,fillstyle=solid,fillcolor=yellow](1.31,0.16999982){0.23}
			\usefont{T1}{ptm}{m}{n}
			\rput(0.77140623,-0.20999995){$2$}
			\usefont{T1}{ptm}{m}{n}
			\rput(1.5914062,-0.14999995){$2$}
			\usefont{T1}{ptm}{m}{n}
			\rput(2.5514061,-0.46999994){$1$}
			\psline[linewidth=0.04,fillstyle=solid,fillcolor=black](3.56,-0.43999994)(4.66,-1.4000001)
			\psline[linewidth=0.04,fillstyle=solid,fillcolor=black](4.36,-0.06000005)(4.66,-1.4000001)
			\psline[linewidth=0.04,fillstyle=solid,fillcolor=black](4.92,0.09999985)(4.66,-1.4199998)
			\psline[linewidth=0.04,fillstyle=solid,fillcolor=black](5.68,-0.28000006)(4.68,-1.4000001)
			\psdots[dotsize=0.28,dotstyle=asterisk](4.32,0.15999988)
			\pscircle[linewidth=0.04,dimen=outer](4.32,0.16000001){0.24}
			\pscircle[linewidth=0.04,dimen=outer,fillstyle=solid,fillcolor=yellow](3.47,-0.37000015){0.23}
			\pscircle[linewidth=0.04,dimen=outer,fillstyle=solid,fillcolor=yellow](4.99,0.30999985){0.23}
			\pscircle[linewidth=0.04,dimen=outer,fillstyle=solid,fillcolor=yellow](5.71,-0.17000015){0.23}
			\pscircle[linewidth=0.04,dimen=outer,fillstyle=solid,fillcolor=yellow](6.15,-0.77000016){0.23}
			\psline[linewidth=0.04,fillstyle=solid,fillcolor=black](4.66,-1.4199998)(6.0,-0.9)
			\psarc[linewidth=0.04](4.73,-1.1700001){0.65}{-22.249012}{178.91907}
			\usefont{T1}{ptm}{m}{n}
			\rput(3.9314063,-0.42999995){$3$}
			\usefont{T1}{ptm}{m}{n}
			\rput(4.641406,-0.04999995){$i$}
			\usefont{T1}{ptm}{m}{n}
			\rput(6.0514064,-0.32999995){$2$}
			\usefont{T1}{ptm}{m}{n}
			\rput(6.411406,-1.1099999){$0$}
			\usefont{T1}{ptm}{m}{n}
			\rput(5.311406,0.01000005){$2$}
			\psframe[linewidth=0.04,linecolor=color2111557,framearc=0.18213224,dimen=outer](6.68,1.4799999)(0.0,-2.0200002)
			\psline[linewidth=0.04,linestyle=dashed,dash=0.16cm 0.16cm,fillstyle=solid,fillcolor=black](2.96,1.44)(2.96,-2.0)
			\psframe[linewidth=0.04,linecolor=color2111557,framearc=0.18432,dimen=outer](11.2,2.14)(7.2,-2.0200002)
			\psline[linewidth=0.04,fillstyle=solid,fillcolor=black](7.88,-0.6399999)(8.98,-1.6000001)
			\psline[linewidth=0.04,fillstyle=solid,fillcolor=black](8.68,-0.26000005)(8.98,-1.6000001)
			\psline[linewidth=0.04,fillstyle=solid,fillcolor=black](9.24,-0.10000015)(8.98,-1.6199999)
			\psline[linewidth=0.04,fillstyle=solid,fillcolor=black](10.0,-0.48000005)(9.0,-1.6000001)
			\psdots[dotsize=0.28,dotstyle=asterisk](8.64,-0.04000013)
			\pscircle[linewidth=0.04,dimen=outer](8.64,-0.03999999){0.24}
			\pscircle[linewidth=0.04,dimen=outer,fillstyle=solid,fillcolor=yellow](7.79,-0.5700002){0.23}
			\pscircle[linewidth=0.04,dimen=outer,fillstyle=solid,fillcolor=yellow](9.31,0.10999985){0.23}
			\pscircle[linewidth=0.04,dimen=outer,fillstyle=solid,fillcolor=yellow](10.03,-0.37000015){0.23}
			\pscircle[linewidth=0.04,dimen=outer,fillstyle=solid,fillcolor=yellow](10.47,-0.97000015){0.23}
			\psline[linewidth=0.04,fillstyle=solid,fillcolor=black](8.98,-1.6199999)(10.32,-1.0999999)
			\psarc[linewidth=0.04](9.05,-1.37){0.65}{-22.249012}{178.91907}
			\usefont{T1}{ptm}{m}{n}
			\rput(8.251407,-0.62999994){$3$}
			\usefont{T1}{ptm}{m}{n}
			\rput(8.961407,-0.24999996){$i$}
			\usefont{T1}{ptm}{m}{n}
			\rput(10.371407,-0.53){$2$}
			\usefont{T1}{ptm}{m}{n}
			\rput(10.731406,-1.31){$0$}
			\usefont{T1}{ptm}{m}{n}
			\rput(9.631406,-0.18999995){$2$}
			\pscircle[linewidth=0.04,dimen=outer,fillstyle=solid,fillcolor=yellow](7.67,1.3499999){0.23}
			\pscircle[linewidth=0.04,dimen=outer,fillstyle=solid,fillcolor=yellow](9.65,1.3899999){0.23}
			\psarc[linewidth=0.04](8.59,0.22999994){0.85}{37.146687}{138.46822}
			\psline[linewidth=0.04,fillstyle=solid,fillcolor=black](9.520976,1.2112194)(8.62,0.15999985)
			\psline[linewidth=0.04,fillstyle=solid,fillcolor=black](8.64,1.48)(8.64,0.19999994)
			\psline[linewidth=0.04,fillstyle=solid,fillcolor=black](7.8,1.1999999)(8.62,0.17999995)
			\pscircle[linewidth=0.04,dimen=outer,fillstyle=solid,fillcolor=yellow](8.65,1.6899998){0.23}
			\usefont{T1}{ptm}{m}{n}
			\rput(8.111406,1.3100001){$2$}
			\usefont{T1}{ptm}{m}{n}
			\rput(8.931406,1.37){$2$}
			\usefont{T1}{ptm}{m}{n}
			\rput(9.891406,1.0500001){$1$}
			\usefont{T1}{ptm}{m}{n}
			\rput(4.021406,-1.41){$F$}
			\usefont{T1}{ptm}{m}{n}
			\rput(0.51140624,-1.01){$\phi$}
			\usefont{T1}{ptm}{m}{n}
			\rput(7.751406,0.53000003){$\phi$}
			\usefont{T1}{ptm}{m}{n}
			\rput(8.341406,-1.55){$F$}
			\end{pspicture}}
	\end{center}
	\caption{Corolla operator $\phi(X)\partial_i$ applied to $F.$}\label{fig:corollaop}
\end{figure}

As an example let us consider the Stirling grammar (\cite{Chen})
$$G:=\begin{cases}y\rightarrow xy\\
	x\rightarrow x.\end{cases}$$
 Applying the associated formal operator $n$ times to $y$ it is known that we get
 \begin{equation}\label{eq:stirling}D^n y=\sum_{k=1}^n S(n,k)x^ky,\end{equation}
 $S(n,k)$ being the Stirling numbers of the second kind.
 The differential operator associated to that grammar is
 $$\Ds=xy\partial_y+ x\partial_x.$$

The variables $x$ and $y$ are represented by vertices in two colors, yellow and white. The operator $\Ds$ has as combinatorial representation the sum of two corollas. The operator $xy\partial_y$ acts over a white vertex replacing it by a `ghost' vertex $\ast$ and connecting it to a yellow vertex $x$ and to white one $y$. The operator $x\partial_x$ places a ghost vertex in a yellow vertex and connects it to  another yellow vertex (see Figure \ref{fig:corolastir}).

\begin{figure}[H]
\begin{center}
{	\begin{pspicture}(0,-1.31)(4.48,1.31)
	\pscircle[linecolor=black, linewidth=0.04, dimen=outer](1.35,-0.47){0.27}
	\pscircle[linecolor=black, linewidth=0.04, dimen=outer](1.82,0.63){0.24}
	\psline[linecolor=black, linewidth=0.04](1.46,-0.23)(1.74,0.43)
	\psline[linecolor=black, linewidth=0.04](0.52,0.29)(1.18,-0.31)
	\rput[bl](1.22,-0.59){$\ast_1$}
	\psframe[linecolor=black, linewidth=0.04, dimen=outer](3.8,1.31)(0.0,-1.31)
	\pscircle[linecolor=black, linewidth=0.04, fillstyle=solid,fillcolor=yellow, dimen=outer](3.24,-0.61){0.28}
	\psline[linecolor=black, linewidth=0.04](3.225283,0.5572345)(3.222959,0.48448777)(3.25468,-0.37022668)
	\rput[bl](0.54,-1.19){$xy\partial_y$}
	\rput[bl](2.32,-1.19){$x\partial_x$}
	\rput[bl](0.64,0.69){$x$}
	\rput[bl](2.18,0.71){$y$}
	\rput[bl](3.46,0.81){$x$}
	\rput[bl](3.1,-0.69){$\ast_1$}
	\pscircle[linecolor=black, linewidth=0.04, fillstyle=solid,fillcolor=yellow, dimen=outer](0.46,0.37){0.28}
	\pscircle[linecolor=black, linewidth=0.04, fillstyle=solid,fillcolor=yellow, dimen=outer](3.24,0.63){0.28}
	\end{pspicture}}
\caption{Combinatorial operator $\Ds=xy\partial_y+x\partial_x$.}\label{fig:corolastir}
\end{center}
\end{figure}
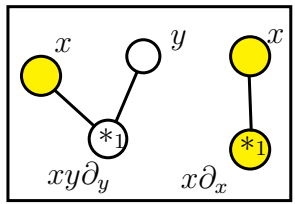

When applying the operator more than one time, for simplicity, we replace the ghost vertices by numbers to keep track of the order in which the operator was applied. If we apply it $n$ times to $y$ (combinatorially represented as a singleton vertex of color white) we obtain an increasing tree  with a path (spine) from the root to the white vertex on the top. Along each node of the spine there are linear branches having in the top of each of them one yellow vertex with weight $y$. The elements of the branches along the spine form a partition $\pi$ of $[n]$. The weight of each such tree is equal to $x$ to the number of branches (blocks of $\pi$) times $y$, the weight of the white vertex on top of the spine (see Figure \ref{fig:Stirling}). In this way we get a visual proof of Equation (\ref{eq:stirling}).
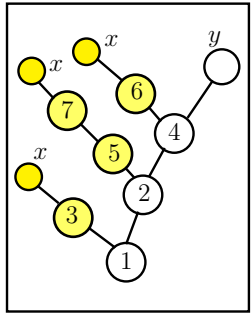
\begin{figure}[H]
\centering
 \psscalebox{0.8 0.8} 
		{			\begin{pspicture}(0,-2.58)(4.06,2.58)
			\definecolor{colour0}{rgb}{1.0,1.0,0.4}
			\rput[bl](2.7,0.3){$4$}
			\rput[bl](1.9,-1.86){$1$}
			\pscircle[linecolor=black, linewidth=0.04, fillstyle=solid,fillcolor=yellow, dimen=outer](0.38,-0.32){0.24}
			\pscircle[linecolor=black, linewidth=0.04, dimen=outer](3.59,1.51){0.31}
			\psline[linecolor=black, linewidth=0.04](1.34,-1.14)(1.82,-1.5)
			\psline[linecolor=black, linewidth=0.04](2.2,-0.9)(2.0,-1.44)
			\psline[linecolor=black, linewidth=0.04](2.38,-0.32)(2.64,0.16)
			\psline[linecolor=black, linewidth=0.04](3.02,0.66)(3.44,1.28)
			\psline[linecolor=black, linewidth=0.04](1.94,-0.12)(2.14,-0.34)
			\psline[linecolor=black, linewidth=0.04](2.34,0.88)(2.58,0.6)
			\psline[linecolor=black, linewidth=0.04](1.56,0.24)(1.04,0.76)
			\psline[linecolor=black, linewidth=0.04](1.96,1.26)(1.4,1.74)
			\pscircle[linecolor=black, linewidth=0.04, dimen=outer](2.8,0.4){0.34}
			\pscircle[linecolor=black, linewidth=0.04, dimen=outer](2.28,-0.62){0.34}
			\pscircle[linecolor=black, linewidth=0.04, dimen=outer](2.0,-1.74){0.34}
			\pscircle[linecolor=black, linewidth=0.044, fillstyle=solid,fillcolor=colour0, dimen=outer](1.12,-1.0){0.34}
			\pscircle[linecolor=black, linewidth=0.044, fillstyle=solid,fillcolor=colour0, dimen=outer](2.16,1.06){0.34}
			\rput[bl](2.2,-0.72){$2$}
			\rput[bl](1.0,-1.08){$3$}
			\rput[bl](2.08,0.98){$6$}
			\pscircle[linecolor=black, linewidth=0.04, fillstyle=solid,fillcolor=yellow, dimen=outer](1.34,1.76){0.24}
			\psline[linecolor=black, linewidth=0.04](0.86,-0.8)(0.86,-0.8)
			\psline[linecolor=black, linewidth=0.04](0.88,-0.76)(0.54,-0.48)
			\pscircle[linecolor=black, linewidth=0.044, fillstyle=solid,fillcolor=colour0, dimen=outer](1.78,0.06){0.34}
			\rput[bl](1.7,-0.06){$5$}
			\pscircle[linecolor=black, linewidth=0.04, fillstyle=solid,fillcolor=yellow, dimen=outer](0.43,1.44){0.24}
			\pscircle[linecolor=black, linewidth=0.044, fillstyle=solid,fillcolor=colour0, dimen=outer](1.02,0.78){0.34}
			\rput[bl](0.92,0.66){$7$}
			\psline[linecolor=black, linewidth=0.04](0.8,0.98)(0.54,1.26)
			\psframe[linecolor=black, linewidth=0.04, dimen=outer](4.06,2.58)(0.0,-2.58)
			\rput[bl](0.46,-0.02){$x$}
			\rput[bl](0.72,1.44){$x$}
			\rput[bl](1.64,1.86){$x$}
			\rput[bl](3.36,1.84){$y$}
			\end{pspicture}
		}\caption{A tree enumerate by $\Ds^7y=(xy\partial_y+ x\partial_x)^7y$ that corresponds to the partition $\pi=\{\{1,3\}\{2,5,7\}\{4,6\}\}$.}\label{fig:Stirling}
\end{figure}

Let us now consider the context-free grammar  $G$ defined in \cite{HWY}.
$$G:=\begin{cases}y\rightarrow yx^{m},\\
x\rightarrow x.\\
\end{cases}$$
Hao et al. \cite{HWY} proved as a particular case that
$$D^nyx=\sum_{k=0}^nW_{m}(n,k)yx^{mk+1}.$$

This grammar $G$ corresponds to the differential operator:
$$\Ds=yx^m\partial_y +x\partial_x.$$

In Figure \ref{fig0} we show the combinatorial interpretation of this operator for $m=2$. Moreover, by the main theorem of \cite{HWY} we have
\begin{equation}\label{eq:Whitney}\Ds^n yx^r=\sum_{k=0}^nW_{m,r}(n,k)yx^{mk+r}.\end{equation}
\begin{figure}[H]
\centering
\includegraphics[scale=0.7]{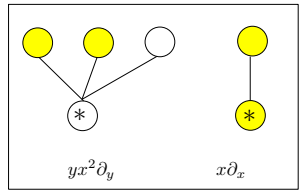}
\caption{Combinatorial interpretation for the operator $\Ds=yx^m\partial_y +x\partial_x$, for $m=2$.} \label{fig0}
\end{figure}

In Figures \ref{fig1} and \ref{fig2}, we show the first and second derivative for $m=2$ and $r=3$, respectively.
\begin{figure}[h]
\centering
\includegraphics[scale=0.7]{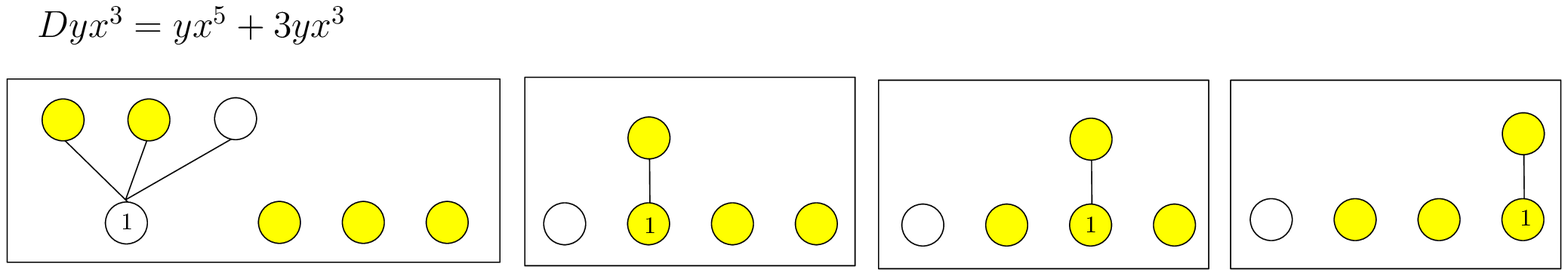}
\caption{The first derivative.} \label{fig1}
\end{figure}

\begin{figure}[h]
\centering
\includegraphics[scale=0.6]{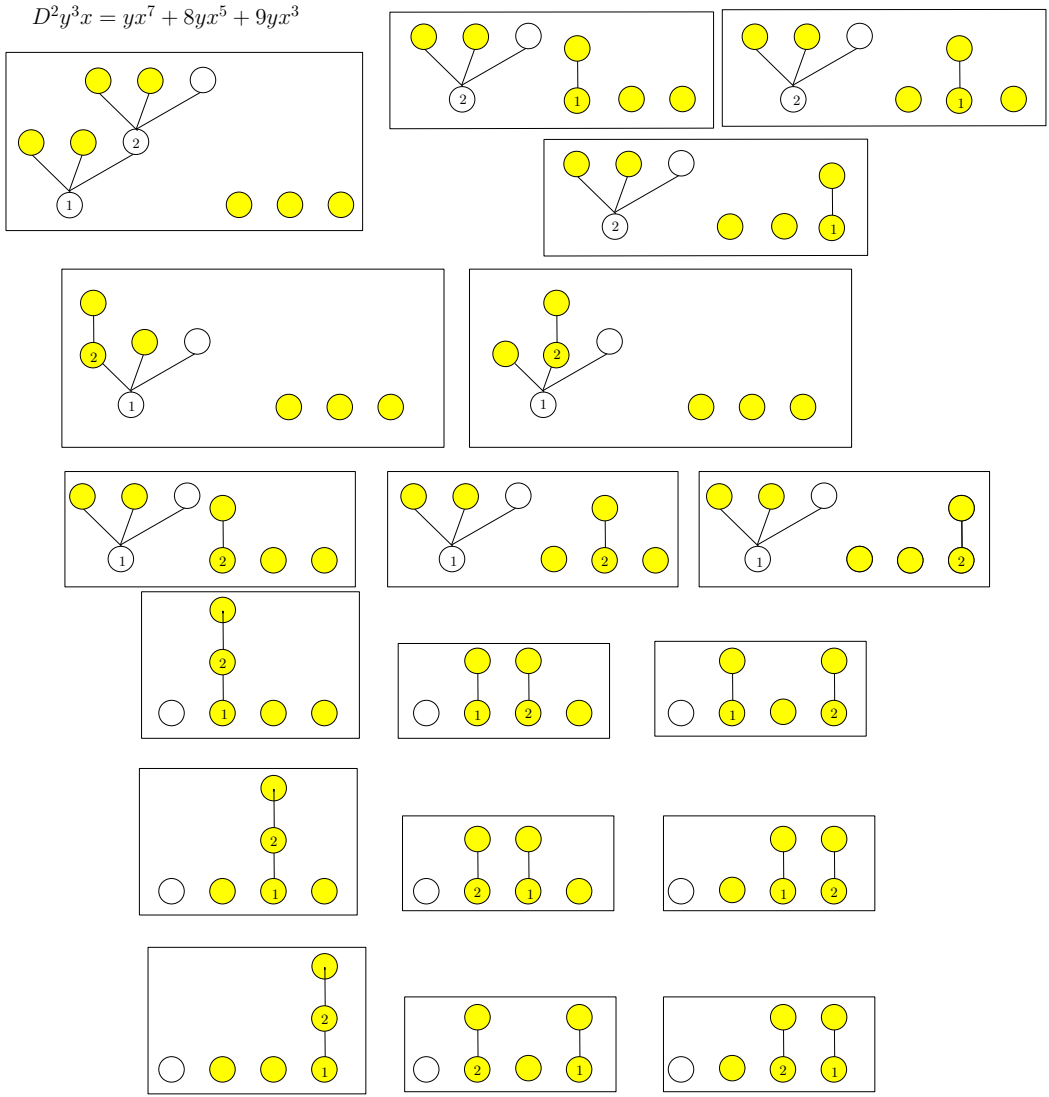}
\caption{The second derivative.} \label{fig2}
\end{figure}

By the iterated application of the combinatorial version of the operator $\Ds$, dropping the corollas over one white vertex and $r$ yellow vertices ($yx^r$) we obtain the following combinatorial structure.  A forest consisting of one increasing tree, grown from the initial white vertex $y$, followed by $r$ linearly ordered branchless increasing trees (grown from the $r$ yellow initial vertices, $x^r$). The trees of this forest  are characterized as follows (see Figure \ref{fig:Whitneytree}).
\begin{enumerate}
	\item The first tree has a spine of white vertices with an unlabeled white vertex at the top having weight $y$. There are $m$ (totally ordered) branches of yellow vertices that sprout from each vertex on the spine. Each branch has an unlabeled yellow vertex at the top,  with weight $x$.
	\item Each of the $r$ branchless increasing trees consists of a set of internal vertices (that may be empty)  with an unlabeled yellow vertex at the top, having weight $x$.
\end{enumerate}
The total weight of each of this forests is equal to $yx^{mk+r}$, where $k$ is the number of vertices in the spine. Then, from Equation (\ref{eq:Whitney}) we have that the $r$-Whitney numbers $W_{m,r}(n,k)$ count the number of forests as above, having $k$ vertices in the spine, and $n$ internal vertices in total.

Reinterpreting the forests of increasing trees we obtain the following combinatorial interpretation of $W_{m,r}(n,k)$.
\begin{theorem}\label{theo:Whitneycom}\normalfont
	The $r$-Whitney numbers $W_{m,r}(n,k)$ count pairs of the form $((\{(B,f_B\}_{B\in\pi}),\mathbf{B})$ where
	\begin{enumerate}
		\item The first component is an $m$-colored partial partition of $\{1,2,\dots,n\}$, having exactly $k$ blocks, $\uplus_{B\in\pi}B=A\subseteq \{1,2,\dots,n\}$. The function $f_B:B\rightarrow [m]$ has the following restriction. It colors the minimum element of the block with color $1$.
		
		\item The second component $\mathbf{B}=(B_1,B_2,\dots,B_r) $ is a weak $r$-composition of $[n]-A$
		$$\biguplus_{j=1}^rB_{r}=[n]-A.$$
	\end{enumerate}
\end{theorem}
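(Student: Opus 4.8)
The plan is to prove the theorem by exhibiting an explicit bijection, preserving both the number $n$ of labels and the number $k$ of blocks/spine vertices, between the forests described just above the statement---which, by Equation (\ref{eq:Whitney}) and the analysis of $\Ds^n yx^r$, are already known to be counted by $W_{m,r}(n,k)$ when they have $k$ spine vertices and $n$ internal vertices---and the pairs $((\{(B,f_B)\}_{B\in\pi}),\mathbf{B})$ of the statement. Since the enumeration $\Ds^nyx^r=\sum_k W_{m,r}(n,k)yx^{mk+r}$ is already in hand, the only thing left to do is to \emph{reinterpret} the forest, and I will not re-derive that enumeration.

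First I would treat the $r$ branchless increasing trees. Each of these is a chain, hence completely determined by its set of internal labels read in increasing order from the root; recording the label set of the $j$-th tree as $B_j$ therefore loses no information. Because the $r$ trees partition the labels not used by the first tree, the ordered tuple $\mathbf{B}=(B_1,\dots,B_r)$ is exactly a weak $r$-composition of $[n]\setminus A$, where $A$ is the set of labels occurring in the first tree, and conversely a weak $r$-composition rebuilds the $r$ chains uniquely.

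Next I would handle the first tree, which carries the colored partial partition. To each spine vertex $s$ I associate the block $B$ consisting of $s$ together with all internal labels in the $m$ ordered branches sprouting from $s$, and I define $f_B$ by sending every label in the $c$-th branch to the color $c$ and sending $s$ to $1$. The structural facts that make this work are: (i) since the tree is increasing and the $m$ branch-tops at $s$ are created at the instant $s$ is inserted, every branch label of $s$ exceeds $s$, so $s=\min B$ and the forced value $f_B(s)=1$ agrees with the stated restriction; (ii) each branch is itself a chain, hence determined by its label set together with its color $c$, so $f_B$ records exactly the branch decomposition; and (iii) distinct spine vertices have distinct minima, so the linearly ordered spine is recovered from the unordered family of blocks by sorting on minima. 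Running these correspondences backwards---splitting a block at its minimum, distributing the remaining elements into branches by color, and increasingly ordering each color class---reconstructs the first tree and supplies the inverse map, yielding the $m$-colored partial partition with exactly $k=|\pi|$ blocks on $A=\uplus_{B\in\pi}B$.

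The step I expect to require the most care is the bookkeeping in (i)--(ii): one must check that forcing $f_B(\min B)=1$ causes no over- or under-counting, i.e.\ that the minimum genuinely plays the role of the spine vertex rather than that of an ordinary branch element. The cleanest confirmation is a check against the generating function (\ref{gfunW2}): a single block of size $b$ admits $m^{\,b-1}$ colored structures (free colors on the $b-1$ non-minimal elements), matching $\frac{e^{mz}-1}{m}=\sum_{b\ge1}m^{\,b-1}\frac{z^b}{b!}$, while each of the $r$ chains contributes a factor $e^{z}$; the product $\frac{e^{rz}}{k!}\bigl(\frac{e^{mz}-1}{m}\bigr)^k$ then reproduces exactly the exponential generating function of $W_{m,r}(n,k)$, confirming that the bijection is complete and parameter-preserving.
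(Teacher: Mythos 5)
Your proposal is correct and takes essentially the same route as the paper: both reinterpret the forests enumerated by $\Ds^n yx^r$, sending each spine vertex together with its $m$ ordered branches to a colored block (the increasing-tree property forcing the spine vertex to be the minimum, hence the color-$1$ restriction) and the $r$ branchless chains to the weak $r$-composition $\mathbf{B}$. Your explicit inverse map and the generating-function consistency check simply flesh out the step the paper dispatches with ``this construction is clearly reversible.''
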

\begin{proof}
We are going to prove that the first tree is in bijection with the colored partitions described in Item (1). We assign to each vertex of the spine of a given tree the set of vertices of the $m$ branches attached to it plus the vertex itself. In this way we obtain the partial partition $\pi$ with $k$ blocks in total. Then color the vertices on the $i$th branch of each block with color $i$, $i=1,2,\dots,m$, and assign color $1$ to the vertex on the spine. Observe that, since the tree is increasing, the vertex on the spine has the minimum label of its block. This construction is clearly reversible (see Figure \ref{fig:Whitneytree}). The $r$ branchless trees are naturally associated to the composition $\mathbf{B}$ by assigning the $i$th tree its set of internal vertices $B_i$.
\begin{figure}[h]
\begin{center}   \psscalebox{1.0 1.0} 
	{
		\begin{pspicture}(0,-3.07)(8.38,3.07)
		\definecolor{colour1}{rgb}{0.0,0.0,0.8}
		\definecolor{colour2}{rgb}{0.0,0.2,0.8}
		\pscircle[linecolor=black, linewidth=0.044, fillstyle=solid,fillcolor=yellow, dimen=outer](5.579948,-2.13){0.34}
		\psline[linecolor=black, linewidth=0.04](2.24,0.33)(1.7800001,0.87)
		\psline[linecolor=black, linewidth=0.04](2.44,0.07)(2.6000001,-0.51)
		\pscircle[linecolor=red, linewidth=0.044, fillstyle=solid,fillcolor=yellow, dimen=outer](2.2999477,0.25){0.34}
		\rput[bl](2.399948,2.11){$x$}
		\pscircle[linecolor=black, linewidth=0.04, fillstyle=solid,fillcolor=yellow, dimen=outer](2.2799478,1.83){0.24}
		\rput[bl](1.1599479,1.85){$x$}
		\pscircle[linecolor=black, linewidth=0.04, fillstyle=solid,fillcolor=yellow, dimen=outer](1.0399479,1.57){0.24}
		\psline[linecolor=black, linewidth=0.04](2.4199479,-0.69)(1.9999478,-0.41)
		\rput[bl](5.4,-1.23){$13$}
		\rput[bl](0.9999478,-0.49){$x$}
		\pscircle[linecolor=black, linewidth=0.04, fillstyle=solid,fillcolor=yellow, dimen=outer](0.87994784,-0.77){0.24}
		\rput[bl](6.7999477,-1.87){$x$}
		\pscircle[linecolor=black, linewidth=0.04, fillstyle=solid,fillcolor=yellow, dimen=outer](6.679948,-2.15){0.24}
		\rput[bl](1.2599478,0.37){$x$}
		\pscircle[linecolor=black, linewidth=0.04, fillstyle=solid,fillcolor=yellow, dimen=outer](1.1399478,0.09){0.24}
		\pscircle[linecolor=red, linewidth=0.044, fillstyle=solid,fillcolor=yellow, dimen=outer](1.74,-1.19){0.34}
		\rput[bl](2.8200002,2.61){$\pi$}
		\rput[bl](6.58,1.83){$\mathbf{B}$}
		\rput[bl](2.3600001,-2.03){$1$}
		\pscircle[linecolor=black, linewidth=0.04, dimen=outer](4.05,1.34){0.31}
		\psline[linecolor=black, linewidth=0.04](2.0,-1.41)(2.28,-1.67)
		\psline[linecolor=black, linewidth=0.04](2.66,-1.07)(2.46,-1.61)
		\psline[linecolor=black, linewidth=0.04](2.8400002,-0.49)(3.1000001,-0.01)
		\psline[linecolor=black, linewidth=0.04](3.48,0.49)(3.9,1.11)
		\pscircle[linecolor=colour1, linewidth=0.04, dimen=outer](3.26,0.23){0.34}
		\pscircle[linecolor=colour2, linewidth=0.04, dimen=outer](2.74,-0.79){0.34}
		\pscircle[linecolor=colour1, linewidth=0.04, dimen=outer](2.46,-1.91){0.34}
		\pscircle[linecolor=colour1, linewidth=0.044, fillstyle=solid,fillcolor=yellow, dimen=outer](1.48,-2.03){0.34}
		\pscircle[linecolor=colour1, linewidth=0.044, fillstyle=solid,fillcolor=yellow, dimen=outer](2.74,1.01){0.34}
		\rput[bl](3.14,0.13){$6$}
		\psline[linecolor=black, linewidth=0.04](1.32,-0.97)(1.32,-0.97)
		\psline[linecolor=black, linewidth=0.04](1.48,-0.99)(1.12,-0.85)
		\rput[bl](2.2399478,0.13){$5$}
		\pscircle[linecolor=red, linewidth=0.044, fillstyle=solid,fillcolor=yellow, dimen=outer](1.6799479,0.95){0.34}
		\rput[bl](1.5799478,0.83){$7$}
		\psline[linecolor=black, linewidth=0.04](1.4599478,1.15)(1.1999478,1.43)
		\psframe[linecolor=black, linewidth=0.04, dimen=outer](8.38,3.07)(0.0,-3.07)
		\rput[bl](4.2000003,1.61){$y$}
		\pscircle[linecolor=black, linewidth=0.044, fillstyle=solid,fillcolor=yellow, dimen=outer](7.81,-2.21){0.34}
		\pscircle[linecolor=black, linewidth=0.044, fillstyle=solid,fillcolor=yellow, dimen=outer](7.81,-1.27){0.34}
		\pscircle[linecolor=black, linewidth=0.04, fillstyle=solid,fillcolor=yellow, dimen=outer](7.81,-0.25){0.24}
		\psline[linecolor=black, linewidth=0.04](7.82,-1.89)(7.8,-1.57)
		\psline[linecolor=black, linewidth=0.04](7.81,-0.93)(7.81,-0.49)
		\rput[bl](2.68,-0.93){$2$}
		\rput[bl](5.499948,-2.25){$4$}
		\pscircle[linecolor=black, linewidth=0.044, fillstyle=solid,fillcolor=yellow, dimen=outer](5.579948,-1.15){0.34}
		\pscircle[linecolor=black, linewidth=0.04, fillstyle=solid,fillcolor=yellow, dimen=outer](5.579948,-0.21){0.24}
		\psline[linecolor=black, linewidth=0.04](5.5699477,-1.81)(5.5899477,-1.43)
		\psline[linecolor=black, linewidth=0.04](5.579948,-0.81)(5.579948,-0.43)
		\rput[bl](7.64,-2.33){$11$}
		\rput[bl](7.6,-1.39){$12$}
		\rput[bl](5.12,-0.05){$x$}
		\rput[bl](7.84,0.07){$x$}
		\psline[linecolor=black, linewidth=0.04](2.5800002,1.27)(2.4,1.67)
		\psline[linecolor=black, linewidth=0.04](2.8600001,0.75)(3.1000001,0.49)
		\psline[linecolor=black, linewidth=0.04](1.5199478,-0.15)(1.3199478,-0.05)
		\psline[linecolor=black, linewidth=0.04](2.1599479,-1.99)(1.7599478,-2.03)
		\psline[linecolor=black, linewidth=0.04](1.1399478,-2.01)(0.7399478,-2.03)
		\rput[bl](0.6799478,-1.75){$x$}
		\pscircle[linecolor=black, linewidth=0.04, fillstyle=solid,fillcolor=yellow, dimen=outer](0.5599478,-2.03){0.24}
		\pscircle[linecolor=colour1, linewidth=0.044, fillstyle=solid,fillcolor=yellow, dimen=outer](1.82,-0.29){0.34}
		\rput[bl](1.74,-0.41){$3$}
		\rput[bl](2.66,0.91){$9$}
		\rput[bl](1.3999478,-2.11){$8$}
		\rput[bl](5.3599477,-1.25){$10$}
		\psline[linecolor=black, linewidth=0.04, linestyle=dashed, dash=0.17638889cm 0.10583334cm](4.759948,-2.99)(4.759948,2.95)
		\rput[bl](1.5399479,-1.29){$13$}
		\rput[bl](3.4399478,1.55){$x$}
		\pscircle[linecolor=black, linewidth=0.04, fillstyle=solid,fillcolor=yellow, dimen=outer](3.3199477,1.27){0.24}
		\psline[linecolor=black, linewidth=0.04](3.2799478,0.53)(3.2999477,1.05)
		\rput[bl](2.5199478,-2.57){$\color{blue}1$}
		\rput[bl](1.6999478,-2.55){$\color{blue}1$}
		\rput[bl](2.9799478,-1.35){$\color{blue}1$}
		\rput[bl](1.9999478,-0.83){$\color{blue}1$}
		\rput[bl](2.6799479,0.35){$\color{blue}1$}
		\rput[bl](3.5399477,-0.25){$\color{blue}1$}
		\rput[bl](1.6199478,0.31){$\color{red}2$}
		\rput[bl](2.2599478,-0.41){$\color{red}2$}
		\rput[bl](1.8599478,-1.81){$\color{red}2$}
		\end{pspicture}
	}
\end{center}\caption{Whitney forest for $m=2$ and $r=3$.}\label{fig:Whitneytree}
\end{figure}
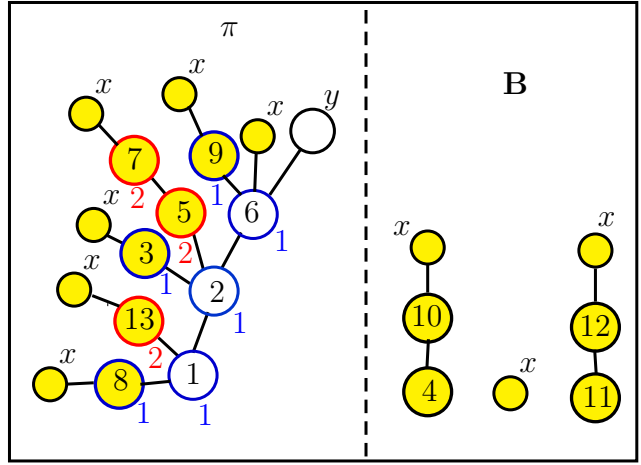
\end{proof}
We represent a colored set $(B,f_B)$ by placing the color of each element as its exponent
$(B,f_B)\equiv\{b^{f(b)}|b\in B\}$. In this way, the colored partition associated to the tree in Figure \ref{fig:Whitneytree} is
$$\{\{1^{\color{blue}{1}}, 8^{\color{blue}{1}},13^{\color{red}{2}}\}, \{2^{\color{blue}{1}}, 3^{\color{blue}{1}}, 5^{\color{red}{2}}, 7^{\color{red}{2}}\},\{6^{\color{blue}{1}},9^{\color{blue}{1}}\}\}.$$
The composition is equal to:
$$\mathbf{B}=(\{4,10\}, \emptyset,\{11,12\}).$$
\begin{example}
	The $r$-Whitney number  $W_{2,2}(2,2) = 1$, the pair being
	\begin{equation*}
	(\{\{1^1\},\{2^1\}\},(\emptyset,\emptyset)).
	\end{equation*}
	
For $W_{2,3}(2,1) = 8$, it enumerates pairs, the first component is a $2$-colored partition on a subset $A$ of $\{1,2\}$ having one block. The second component a weak $3$-composition of $[2]-A$. The pairs being
  \begin{enumerate}
  	\item $(\{\{1^1,2^1\}\},(\emptyset,\emptyset,\emptyset)),$
  	\item $(\{\{1^1,2^2\}\},(\emptyset,\emptyset,\emptyset)),$
  	\item $(\{\{1^1\}\},(\{2\},\emptyset,\emptyset)),$
  	\item $(\{\{1^1\}\},(\emptyset,\{2\},\emptyset)),$
  	\item $(\{\{1^1\}\},(\emptyset,\emptyset,\{2\})),$
  	\item $(\{\{2^1\}\},(\{1\},\emptyset,\emptyset)),$
  	\item $(\{\{2^1\}\},(\emptyset,\{1\},\emptyset)),$
  	\item $(\{\{2^1\}\},(\emptyset,\emptyset,\{1\})).$
  \end{enumerate}
\end{example}

\begin{remark}We get the following combinatorial interpretations for  generalized  Stirling numbers of the second kind obtained by specializing $r$ and $m$.
	
	\begin{enumerate}
	\item For $m=1$ we get a combinatorial interpretation for the $r$-Stirling numbers of the second kind $S_r(n,k)$. They count the pairs $(\pi,\mathbf{B})$, $\pi$ a partition of some subset $A$ of $[n]$, $\mathbf{B}$ a weak $r$-composition of $[n]-A$.
	\item For $r=0$ we get a new generalization $S^{[m]}(n,k)$ of the Stirling numbers of the second kind. It counts the number of colored partitions, as in Theorem \ref{theo:Whitneycom}, Item (1), but over the whole set $[n]$.
	\end{enumerate}
	\end{remark}
	\begin{definition}
		We define the $[m]$-Touchard polynomials, $T_n^{[m]}(x)$, by
		\begin{equation*}
		T_n^{[m]}(x):=\sum_{k=1}^n S^{[m]}(n,k)x^k.\end{equation*}
		The polynomial family $\{T_n^{[m]}(x)\}_{n=0}^{\infty}$ is of binomial type, i.e.,
		\begin{align*}
		       T^{[m]}_0(x)&=1,\\
			T^{[m]}_n(x+y)&=\sum_{k=0}^{n}\binom{n}{k}T^{[m]}_k(x)T^{[m]}_{n-k}(y).
		\end{align*}
		\noindent We shall prove it in Section \ref{sec:sheffer}.
	\end{definition}
\section{The $r$-Dowling Polynomials}
Cheon and Jung \cite{CJ} defined the $r$-Dowling polynomials of degree $n$ by
\begin{align*}
\D_{m,r}(n,u):=\sum_{k=0}^{n}W_{m,r}(n,k)u^k.
\end{align*}
They found some combinatorial identities by means of Riordan arrays.
Let us define the following  generating function
\begin{equation*}
\mathcal{H}_{m,r}(t)=\mathcal{H}(t;x,y)=e^{t\Ds}yx^r=\sum_{n=0}^{\infty}\frac{t^n}{n!}\Ds^n yx^r=\sum_{n=0}^{\infty}\frac{t^n}{n!}\sum_{k=0}^n W_{m,r}(n,k)yx^{km+r}.
\end{equation*}
It is easy to show the following
	\begin{lemma}\label{dowlingh}We have the identities
		\begin{eqnarray*}\Ds^n yx^r&=&yx^r\D_{m,r}(n,x^m),\\
		\mathcal{H}_{m,r}(t)&=&e^{t\Ds}yx^r=yx^r\sum_{n=0}^{\infty}\frac{t^n}{n!}\D_{m,r}(n,x^m).
		\end{eqnarray*}
	\end{lemma}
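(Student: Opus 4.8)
The plan is to obtain both identities as immediate consequences of Equation (\ref{eq:Whitney}) together with the definition of the $r$-Dowling polynomial, so that the proof reduces to recognizing a common factor. First I would recall that Equation (\ref{eq:Whitney}) already furnishes
\begin{equation*}
\Ds^n yx^r=\sum_{k=0}^nW_{m,r}(n,k)yx^{mk+r}.
\end{equation*}
The key observation is that every summand on the right shares the factor $yx^r$, since $yx^{mk+r}=yx^r\,x^{mk}=yx^r (x^m)^k$. Factoring it out yields $\Ds^n yx^r=yx^r\sum_{k=0}^nW_{m,r}(n,k)(x^m)^k$, and the remaining sum is precisely $\D_{m,r}(n,u)$ evaluated at $u=x^m$. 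This establishes the first identity.

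For the second identity I would substitute the first into the definition $\mathcal{H}_{m,r}(t)=e^{t\Ds}yx^r=\sum_{n=0}^{\infty}\frac{t^n}{n!}\Ds^n yx^r$. Since the factor $yx^r$ is independent of the summation index $n$, it can be pulled out of the sum, giving $\mathcal{H}_{m,r}(t)=yx^r\sum_{n=0}^{\infty}\frac{t^n}{n!}\D_{m,r}(n,x^m)$, as claimed.

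I expect no serious obstacle here; the content is entirely formal. The only point that warrants a word of care is that all the manipulations---factoring a monomial out of a finite sum, and extracting an $n$-independent factor from the formal exponential series $e^{t\Ds}yx^r$---take place in the ring of formal power series over $\mathcal{R}$, where they are justified term by term. This is exactly why the lemma can be flagged as easy to show.
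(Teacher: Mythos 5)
Your proof is correct and is exactly the argument the paper intends: the paper states this lemma without proof (``It is easy to show''), and the natural derivation is precisely yours---apply Equation (\ref{eq:Whitney}), factor $yx^{mk+r}=yx^r(x^m)^k$ to recognize $\D_{m,r}(n,x^m)$, and then pull the $n$-independent factor $yx^r$ out of the exponential series. Nothing is missing.
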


\begin{theorem}\label{theo:gfdowling}
The exponential generating function for the $r$-Dowling polynomials is
\begin{align*}
\sum_{n=0}^{\infty}\D_{m,r}(n,u)\frac{t^n}{n!}=\exp\left(rt + u \frac{e^{mt}-1}{m}\right).
\end{align*}
\end{theorem}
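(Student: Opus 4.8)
The plan is to reduce the statement to the known exponential generating function for the $r$-Whitney numbers, equation (\ref{gfunW2}), by a single interchange of summation. First I would substitute the definition $\D_{m,r}(n,u)=\sum_{k=0}^{n}W_{m,r}(n,k)u^k$ into the left-hand side to obtain the double sum
\[
\sum_{n=0}^{\infty}\frac{t^n}{n!}\sum_{k=0}^{n}W_{m,r}(n,k)u^k .
\]
Since $W_{m,r}(n,k)=0$ for $k>n$, the support is the triangle $0\le k\le n$, so I would reorganize the sum as one over $k\ge 0$ with $n$ ranging over $n\ge k$:
\[
\sum_{k=0}^{\infty}u^k\sum_{n=k}^{\infty}W_{m,r}(n,k)\frac{t^n}{n!} .
\]
At the level of formal power series in $t$ this reindexing is legitimate because each power $t^n$ receives contributions from only finitely many pairs $(n,k)$.

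Next I would invoke (\ref{gfunW2}) with $z=t$ to replace the inner sum, which gives
\[
\sum_{k=0}^{\infty}u^k\,\frac{e^{rt}}{k!}\left(\frac{e^{mt}-1}{m}\right)^k
= e^{rt}\sum_{k=0}^{\infty}\frac{1}{k!}\left(u\,\frac{e^{mt}-1}{m}\right)^k .
\]
Recognizing the remaining series as the exponential series evaluated at $u\frac{e^{mt}-1}{m}$, I would conclude that this equals $e^{rt}\exp\!\left(u\frac{e^{mt}-1}{m}\right)=\exp\!\left(rt+u\frac{e^{mt}-1}{m}\right)$, which is precisely the claimed identity. The only point requiring care — and hence the nominal ``main obstacle'' — is the interchange of the two summations, but this difficulty is purely formal and dissolves as soon as one observes the triangular support of $W_{m,r}(n,k)$.

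Alternatively, and more in the spirit of the paper's combinatorial-differential methodology, one could prove the statement by setting $u=x^m$ and appealing to Lemma \ref{dowlingh}. The series $\mathcal{H}_{m,r}(t)=e^{t\Ds}yx^r$ satisfies the linear evolution equation $\partial_t\mathcal{H}=\Ds\mathcal{H}$ with initial datum $\mathcal{H}|_{t=0}=yx^r$, and a direct computation shows that the candidate $yx^r\exp\!\left(rt+x^m\frac{e^{mt}-1}{m}\right)$ satisfies the same equation and initial condition; uniqueness of the solution then yields the result after dividing by $yx^r$ and recalling $\D_{m,r}(n,x^m)$ from the lemma. I would nonetheless favor the first route for its brevity, reserving the operator argument as a consistency check against the combinatorial picture developed earlier.
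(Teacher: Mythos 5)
Your proposal is correct, but the route you favor is genuinely different from the paper's. You reduce Theorem \ref{theo:gfdowling} to Mez\H{o}'s column generating function \eqref{gfunW2}: after inserting the definition $\D_{m,r}(n,u)=\sum_{k=0}^n W_{m,r}(n,k)u^k$ and interchanging the two sums (legitimate as formal power series, since the support is triangular and $\left(\frac{e^{mt}-1}{m}\right)^k$ has order at least $k$ in $t$, so the outer sum over $k$ is summable), the identity follows by recognizing the exponential series. This is brief and airtight, but it consumes the cited formula \eqref{gfunW2} as a black box, and the theorem then amounts to the observation that the bivariate generating function packages the column generating functions. The paper instead works inside its operator calculus: writing $\Ha_{m,r}(t)=e^{t\Ds}yx^r$, it uses that $\Ds$ is a derivation, hence $e^{t\Ds}$ is multiplicative, to obtain $\Ha'_{m,r}(t)=\Ha_{m,r}(t)\left(x^me^{mt}+r\right)$, integrates the logarithmic derivative with the initial condition $\Ha_{m,r}(0)=yx^r$, and then invokes Lemma \ref{dowlingh} with $u=x^m$. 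That argument is self-contained given \eqref{eq:Whitney} and, as a byproduct, re-derives \eqref{gfunW2} by expanding the exponential, which is precisely the paper's program of recovering known identities from the combinatorial differential calculus rather than citing them. Your alternative route (the evolution equation $\partial_t\Ha_{m,r}=\Ds\Ha_{m,r}$ with initial datum $yx^r$, verification that the candidate solves it, and uniqueness of formal solutions) is essentially the paper's proof recast as a verification rather than a derivation: the computation $\Ds\left(yx^re^{rt+\frac{x^m}{m}(e^{mt}-1)}\right)=\left(r+x^me^{mt}\right)yx^re^{rt+\frac{x^m}{m}(e^{mt}-1)}$ it requires is the same one the paper performs via multiplicativity, so it serves as a consistency check but not as an independent argument.
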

\begin{proof}
Since $\Ds$ is a derivation, it is easy to show that the operator $e^{t\Ds}$ is multiplicative. Hence
\begin{eqnarray*}
\mathcal{H}'_{m,r}(t)=e^{t\Ds}\Ds yx^r&=&e^{t\Ds}(yx^{m+r} + ryx^r)=(e^{t\Ds}yx^r)\cdot(e^{t\Ds}x)^m + re^{t\Ds}yx^r\\\nonumber&=&\mathcal{H}_{m,r}(t)(x^me^{mt}+r).
\end{eqnarray*}
From that,
\begin{equation*}
\frac{\mathcal{H}'_{m,r}(t)}{\mathcal{H}_{m,r}(t)}=\frac{d}{dt}\ln(\mathcal{H}_{m,r}(t))=x^me^{mt}+r.
\end{equation*}
Integrating and using the initial condition $\mathcal{H}_{m,r}(0)=yx^r$, we get:
\begin{equation*}
\mathcal{H}_{m,r}(t)=yx^r \exp\{rt+\frac{x^m}{m}(e^{mt}-1)\}.
\end{equation*}
By Lemma \ref{dowlingh}, making $u=x^m$ we get the result.
\end{proof}

\begin{theorem}
The $r$-Dowling polynomials satisfy the following relation for any integers $r,l\ge0$
\[\D_{m,r+l}(n,u)=\sum_{k=0}^n\binom{n}{k}l^{n-k}\D_{m,r}(k,u).\]
\end{theorem}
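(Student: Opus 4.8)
The plan is to read the identity off the exponential generating function already established in Theorem~\ref{theo:gfdowling}. Writing $G_s(t):=\sum_{n\ge0}\D_{m,s}(n,u)\,t^n/n! =\exp\!\big(st+u(e^{mt}-1)/m\big)$, the only dependence on the shift parameter is through the factor $e^{st}$. Hence the substitution $s=r+l$ factors cleanly as
\begin{equation*}
G_{r+l}(t)=\exp\!\Big((r+l)t+u\tfrac{e^{mt}-1}{m}\Big)=e^{lt}\,G_r(t),
\end{equation*}
which is the whole content of the proof packaged into one line. First I would record this factorization, then extract the coefficient of $t^n/n!$ on both sides.

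The extraction is a standard binomial (exponential) convolution: since $e^{lt}=\sum_{j\ge0}l^{j}t^{j}/j!$, multiplying the two exponential generating functions gives
\begin{equation*}
\D_{m,r+l}(n,u)=n!\,[t^n]\,e^{lt}G_r(t)=\sum_{k=0}^n\binom{n}{k}\,l^{\,n-k}\,\D_{m,r}(k,u),
\end{equation*}
where the weight $l^{n-k}$ is exactly the $\big((n-k)!\big)$-normalized coefficient of $e^{lt}$. This finishes the argument; I would present it in this order because it isolates the single algebraic observation (the splitting $e^{(r+l)t}=e^{lt}e^{rt}$) and leaves only routine coefficient bookkeeping.

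Two remarks on alternative routes that fit the paper's theme. An operator-theoretic derivation avoids invoking the closed form of $G_r$: using that $e^{t\Ds}$ is multiplicative (as in the proof of Theorem~\ref{theo:gfdowling}) together with $\Ds x=x$, hence $e^{t\Ds}x=x e^{t}$, one gets $\mathcal{H}_{m,r+l}(t)=e^{t\Ds}(yx^r\cdot x^l)=\mathcal{H}_{m,r}(t)\cdot x^{l}e^{lt}$, and Lemma~\ref{dowlingh} with $u=x^m$ reproduces the factorization above. There is also a purely combinatorial proof via Theorem~\ref{theo:Whitneycom}: in a structure counted by $\D_{m,r+l}(n,u)$, let $C$ be the set of elements placed in the last $l$ of the $r+l$ composition blocks; choosing $C$ contributes $\binom{n}{n-k}=\binom{n}{k}$, distributing its $n-k$ labelled elements among $l$ ordered bins contributes $l^{n-k}$, and the remaining $\D_{m,r}$-structure on the complementary $k$-set contributes $\D_{m,r}(k,u)$. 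I do not anticipate a genuine obstacle here; the only point requiring care is matching normalizations so that the $(n-k)$ distributed elements yield $l^{n-k}$ rather than an ordered-factorial factor, which the exponential generating function handles automatically.
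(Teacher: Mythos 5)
Your proposal is correct, and in fact it contains the paper's proof as one of your side remarks. Your \emph{primary} route, however, is not the one the paper takes: you read the factorization $G_{r+l}(t)=e^{lt}G_r(t)$ directly off the closed-form exponential generating function of Theorem~\ref{theo:gfdowling} and then perform the binomial convolution, whereas the paper never invokes the closed form. Instead it stays at the operator level: it factors $\Ha_{m,r+l}(t)=e^{t\Ds}\bigl(yx^r\cdot x^l\bigr)=\bigl(e^{t\Ds}yx^r\bigr)\bigl(e^{t\Ds}x^l\bigr)=x^le^{lt}\Ha_{m,r}(t)$ using multiplicativity of $e^{t\Ds}$, applies the Cauchy product to the series $\widehat{\D}_{m,r}(n,x,y)=\sum_k W_{m,r}(n,k)yx^{km+r}$, and only then specializes $u=x^m$ via Lemma~\ref{dowlingh} --- which is exactly your first ``alternative route.'' The trade-off: your EGF route is shorter and entirely routine once Theorem~\ref{theo:gfdowling} is in hand (and it legitimately is, since that theorem precedes this one), while the paper's operator route keeps the argument inside the combinatorial differential calculus framework and would go through even without the explicit closed form. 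Your second remark, the bijective proof via Theorem~\ref{theo:Whitneycom} (peel off the elements lying in the last $l$ composition blocks, giving $\binom{n}{k}l^{n-k}$, leaving a $\D_{m,r}$-structure on the remaining $k$-set), is a genuine third approach that the paper does not give at all, and it is a nice addition since it explains the identity at the level of the objects being counted rather than their generating functions.
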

\begin{proof}
Let $\widehat{\D}_{m,r}(n,x,y):=\sum_{k=0}^n W_{m,r}(n,k)yx^{km+r}$.
Then
\begin{multline*}
\Ha_{m,r+l}(t)=e^{t\Ds}yx^{r+l}=\left(e^{t\Ds}yx^r\right) \left(e^{t\Ds}x^l\right)\\=x^le^{lt}\Ha_{m,r}(t)=x^l\sum_{n=0}^\infty \frac{t^n}{n!}\left(\sum_{j=0}^n \binom nj l^{n-j}\widehat{\D}_{m,r}(j,x,y)\right).
\end{multline*}
Therefore
$$\widehat{\D}_{m,r}(n,x,y)=x^l\sum_{j=0}^n\binom nj l^{n-j}\widehat{\D}_{m,r}(j,x,y).$$
Modifying a bit this equality  we obtain the desired result.
\end{proof}
In particular if $l=1$ then
\begin{equation}\label{dowtou}\D_{m,r+1}(n,u)=\sum_{k=0}^n\binom nk \D_{m,r}(k,u).\end{equation}

In the following theorem we generalize the beautiful relation given por Spivey  \cite{S} for the Bell numbers $B_n$. The Spivey's formula says that:
$$B_{n+m}=\sum_{k=0}^n\sum_{j=0}^m j^{n-k}S(m,j)\binom n k B_k,$$
where   $S(n,j)$ is the Stirling number of the second kind.  We generalize this identity for the $r$-Whitney numbers by using differential operators.
\begin{theorem}\label{teoSpivey}
The $r$-Dowling polynomials satisfy the following formula
\begin{equation*}
\D_{m,r}(n+h,u)=\sum_{k=0}^n\sum_{j=0}^{h}\binom{n}{k}\D_{m,r}(k,u) W_{m,r}(h,j)u^jj^{n-k}m^{n-k}.
\end{equation*}
\end{theorem}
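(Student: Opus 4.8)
The plan is to exploit the trivial semigroup property $\Ds^{n+h}=\Ds^{n}\Ds^{h}$ together with Lemma \ref{dowlingh}, splitting the $n+h$ applications of the operator into a first block of $h$ applications followed by a second block of $n$. First I would apply $\Ds^{h}$ to $yx^{r}$ using Equation (\ref{eq:Whitney}), obtaining
\[\Ds^{h}yx^{r}=y\sum_{j=0}^{h}W_{m,r}(h,j)\,x^{mj+r}.\]

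Next, since each summand is again of the form $yx^{s}$ with $s=mj+r$ a non-negative integer, I would apply Lemma \ref{dowlingh} with the shifted parameter $s$ in place of $r$, which gives $\Ds^{n}(yx^{mj+r})=yx^{mj+r}\D_{m,mj+r}(n,x^{m})$. Using $x^{mj+r}=x^{r}(x^{m})^{j}$, comparing with the left-hand side $\Ds^{n+h}yx^{r}=yx^{r}\D_{m,r}(n+h,x^{m})$, cancelling the common factor $yx^{r}$, and setting $u=x^{m}$ yields the intermediate identity
\[\D_{m,r}(n+h,u)=\sum_{j=0}^{h}W_{m,r}(h,j)\,u^{j}\,\D_{m,mj+r}(n,u).\]

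The final step is to eliminate the shifted parameter $mj+r$. Here I would invoke the addition identity established just above, namely $\D_{m,r+l}(n,u)=\sum_{k=0}^{n}\binom{n}{k}l^{n-k}\D_{m,r}(k,u)$, specialized to $l=mj$, so that $\D_{m,mj+r}(n,u)=\sum_{k=0}^{n}\binom{n}{k}m^{n-k}j^{n-k}\D_{m,r}(k,u)$. Substituting this into the intermediate identity and interchanging the two finite sums produces exactly the claimed formula.

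I expect the only delicate point to be the legitimacy of applying Lemma \ref{dowlingh} with the shifted exponent $mj+r$ rather than $r$: one must observe that the lemma is an operator identity valid for every non-negative integer exponent, and that the $\D_{m,s}$ it produces carries the shifted parameter $s=mj+r$, which is precisely what forces the use of the addition identity in the last step. Everything else is routine bookkeeping, namely the cancellation of $yx^{r}$, the substitution $u=x^{m}$, and the reindexing of the double sum.
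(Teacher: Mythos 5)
Your proof is correct, but it takes a genuinely different route from the paper's. The paper never touches the semigroup identity $\Ds^{n+h}=\Ds^{n}\Ds^{h}$ at the level of finite operator powers; instead it differentiates the generating function $\mathcal{H}_{m,r}(t)=e^{t\Ds}yx^{r}$ $h$ times in two ways, using $\frac{d^h}{dt^h}e^{t\Ds}=e^{t\Ds}\Ds^{h}$ together with the multiplicativity of $e^{t\Ds}$ (and $e^{t\Ds}x=xe^{t}$) to arrive at the EGF identity
\begin{equation*}
\sum_{n=0}^{\infty}\D_{m,r}(n+h,u)\frac{t^n}{n!}=\Bigl(\sum_{n=0}^{\infty}\D_{m,r}(n,u)\frac{t^n}{n!}\Bigr)\,\D_{m,r}(h,ue^{mt}),
\end{equation*}
and then extracts coefficients via a Cauchy product. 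You instead stay entirely at the level of polynomial identities: Equation (\ref{eq:Whitney}) applied twice (the second time with the shifted exponent $mj+r$, which is legitimate since that equation holds for every non-negative integer exponent, as you correctly flag) gives the intermediate parameter-shift formula $\D_{m,r}(n+h,u)=\sum_{j=0}^{h}W_{m,r}(h,j)u^{j}\D_{m,mj+r}(n,u)$, and the shift is then removed by the addition theorem $\D_{m,r+l}(n,u)=\sum_{k}\binom{n}{k}l^{n-k}\D_{m,r}(k,u)$ with $l=mj$. What each approach buys: yours needs no generating functions in $t$ and no Cauchy product inside this proof, and it produces the shifted-parameter identity as a byproduct of independent interest; however, it is not logically lighter overall, since the addition theorem you invoke is itself proved in the paper by exactly the $e^{t\Ds}$-multiplicativity argument, whereas the paper's proof of Theorem \ref{teoSpivey} is self-contained (modulo Lemma \ref{dowlingh}) and yields the displayed EGF identity as its byproduct. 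Both arguments implement the same underlying decomposition of $n+h$ applications of $\Ds$ into a block of $h$ followed by a block of $n$; they differ in whether that decomposition is processed analytically or combinatorially.
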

\begin{proof}
We compute the derivative in two ways. First, in a direct way
\begin{equation*}\label{dowl}
\frac{d^h}{dt^h}\mathcal{H}_{m,r}(t)=yx^r\sum_{n=0}^{\infty}\D_{m,r}(n+h,x^m)\frac{t^n}{n!},
\end{equation*}
and secondly, by using the identity $\frac{d^h}{dt^h}e^{t\Ds}=e^{t\Ds}\Ds^h$ and Lemma \ref{dowlingh}
\begin{eqnarray}\nonumber
\frac{d^h}{dt^h}\mathcal{H}_{m,r}(t)=\frac{d^h}{dt^h}e^{t\Ds}yx^r&=&e^{t\Ds}\Ds^hyx^r=e^{t\Ds}yx^r\D_{m,r}(h,x^m)\\
	&=&yx^r\left(\sum_{n=0}^{\infty}\D_{m,r}(n,x^m)\frac{t^n}{n!}\right)\left(\D_{m,r}(h,x^m e^{mt})\right).\label{dowl2}
\end{eqnarray}
		Making the change $u=x^m$ and from the generating functions in Equation (\ref{dowl}) and in Equation (\ref{dowl2}) we get
		\begin{equation}\label{der}
		\sum_{n=0}^{\infty}\D_{m,r}(n+h,u)\frac{t^n}{n!}=\left(\sum_{n=0}^{\infty}\D_{m,r}(n,u)\frac{t^n}{n!}\right)\left(\D_{m,r}(h,ue^{mt})\right).
		\end{equation}
		Expanding $\D_{m,r}(h,ue^{mt})$,
		\begin{multline*}
			\mathcal{D}_{m,r}(h,ue^{mt})=\sum_{j=0}^h W_{m,r}(h,j)u^je^{mjt}\\
			=\sum_{j=0}^h W_{m,r}(h,j)u^j\sum_{k=0}^{\infty}m^k j^k \frac{t^k}{k!}=\sum_{k=0}^{\infty}\left(\sum_{j=0}^hW_{m,r}(h,j)m^ku^j j^k\right)\frac{t^k}{k!}.
		\end{multline*}
	By plugging it into Equation (\ref{der}), computing the Cauchy product and equating coefficients, we obtain the result.
	\end{proof}
The above identity was proved by using  a different approach in \cite{Xu2}.  Moreover, this  identity is a particular case of the main result of \cite{Xu}.

From Theorem \ref{teoSpivey} we obtain the following convolution formula.
\begin{corollary}\label{coroS}
For $0\leq s \leq n+h$, we have
$$W_{m,r}(n+h,s)=\sum_{k=0}^n\sum_{j=0}^h \binom{n}{k}W_{m,r}(h,j)W_{m,r}(k,s-j)(jm)^{n-k}.$$
\end{corollary}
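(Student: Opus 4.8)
The plan is to read the statement off directly from the polynomial identity in Theorem \ref{teoSpivey} by extracting the coefficient of $u^s$ on both sides. The only input needed is the definition of the $r$-Dowling polynomials, namely $\D_{m,r}(n,u)=\sum_{s\ge0} W_{m,r}(n,s)u^s$, which turns the identity of Theorem \ref{teoSpivey} into an identity between polynomials in $u$ whose coefficients are exactly the $r$-Whitney numbers we wish to relate.

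First I would rewrite the left-hand side $\D_{m,r}(n+h,u)$ as $\sum_{s\ge0} W_{m,r}(n+h,s)u^s$, so that its coefficient of $u^s$ is precisely $W_{m,r}(n+h,s)$, the quantity on the left of Corollary \ref{coroS}. Next I would expand the inner factor on the right-hand side of Theorem \ref{teoSpivey} as $\D_{m,r}(k,u)=\sum_{i\ge0} W_{m,r}(k,i)u^i$. Since the summand in Theorem \ref{teoSpivey} already carries an explicit factor $u^j$, multiplying it by $u^i$ produces the monomial $u^{i+j}$, so a term indexed by $(k,j,i)$ contributes to the power $u^{i+j}$.

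Grouping the right-hand side by the power of $u$ and setting $i+j=s$, equivalently $i=s-j$, gives the coefficient of $u^s$ as
\[
\sum_{k=0}^n\sum_{j=0}^{h}\binom{n}{k}\,W_{m,r}(k,s-j)\,W_{m,r}(h,j)\,j^{n-k}m^{n-k}.
\]
The terms with $s-j<0$ or $s-j>k$ vanish automatically, because $W_{m,r}(k,s-j)=0$ outside that range, so no additional restriction on the double sum is required and the re-indexing is clean. Equating the two coefficients of $u^s$ and writing $j^{n-k}m^{n-k}=(jm)^{n-k}$ yields exactly the claimed convolution formula.

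There is essentially no obstacle here; the argument is a single coefficient comparison. The only point deserving a moment's care is the bookkeeping of indices when passing from the product $\D_{m,r}(k,u)\cdot u^j$ to the coefficient of a fixed power $u^s$, together with the observation that the vanishing of out-of-range $r$-Whitney numbers lets the double sum be written without explicit bounds on $s-j$.
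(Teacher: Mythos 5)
Your proof is correct and is exactly the route the paper takes: the corollary is stated there as an immediate consequence of Theorem \ref{teoSpivey}, obtained by comparing coefficients of $u^s$ in that polynomial identity, with the out-of-range values $W_{m,r}(k,s-j)$ vanishing just as you note. Nothing further is needed.
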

In particular if $r=1$ we obtain the Theorem 4.3 of \cite{HWY}. 	By setting  $h=1$ in  Theorem \ref{teoSpivey}  and Corollary \ref{coroS} we  obtain the following corollary. 	
\begin{corollary}\label{Dowlingrecur}
The $r$-Dowling polynomials satisfy the following recursive formula
\begin{align*}
\D_{m,r}(n+1,u)=r\D_{m,r}(n,u)+u\sum_{j=0}^{n}\binom nj m^{n-j}\D_{m,r}(j,u).
\end{align*}
The $r$-Whitney numbers of the second kind satisfy the following recursive formula
\begin{align*}
W_{m,r}(n+1,k)=rW_{m,r}(n,k)+\sum_{j=k-1}^{n}\binom nj m^{n-j}W_{m,r}(j,k-1).
\end{align*}
\end{corollary}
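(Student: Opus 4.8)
The plan is to specialize $h=1$ in both Theorem~\ref{teoSpivey} and Corollary~\ref{coroS}, so that the only real work is evaluating the inner sum at its two surviving indices and simplifying. First I would record the two base values of the $r$-Whitney numbers, namely $W_{m,r}(1,0)=r$ and $W_{m,r}(1,1)=1$; these follow immediately from the recurrence $W_{m,r}(n,k)=W_{m,r}(n-1,k-1)+(km+r)W_{m,r}(n-1,k)$ together with $W_{m,r}(0,0)=1$ and $W_{m,r}(0,k)=0$ for $k>0$.

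For the first identity I would set $h=1$ in Theorem~\ref{teoSpivey}, obtaining
\[\D_{m,r}(n+1,u)=\sum_{k=0}^n\sum_{j=0}^{1}\binom{n}{k}\D_{m,r}(k,u)W_{m,r}(1,j)u^jj^{n-k}m^{n-k}.\]
The key observation is that the factor $j^{n-k}$ forces the $j=0$ term to vanish unless $k=n$ (using the convention $0^0=1$), collapsing that term to $W_{m,r}(1,0)\D_{m,r}(n,u)=r\D_{m,r}(n,u)$. The $j=1$ term, with $W_{m,r}(1,1)=1$ and $1^{n-k}=1$, yields $u\sum_{k=0}^n\binom{n}{k}m^{n-k}\D_{m,r}(k,u)$, and adding the two pieces gives the stated formula after renaming $k$ to $j$.

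For the second identity the procedure is identical: setting $h=1$ in Corollary~\ref{coroS} and writing $i$ for the inner index gives
\[W_{m,r}(n+1,k)=\sum_{j=0}^n\sum_{i=0}^{1}\binom{n}{j}W_{m,r}(1,i)W_{m,r}(j,k-i)(im)^{n-j}.\]
Again the factor $(im)^{n-j}$ annihilates the $i=0$ term except at $j=n$, leaving $rW_{m,r}(n,k)$, while the $i=1$ term produces $\sum_{j=0}^n\binom{n}{j}m^{n-j}W_{m,r}(j,k-1)$. Finally I would raise the lower limit of this sum from $j=0$ to $j=k-1$, which is harmless because $W_{m,r}(j,k-1)=0$ whenever $j<k-1$.

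The computation is entirely mechanical, so there is no deep obstacle; the one point that requires care—and the only place a careless substitution would go wrong—is the degenerate $j=0$ (respectively $i=0$) summand, where the vanishing of $j^{n-k}$ off the diagonal $k=n$ must be handled with the convention $0^0=1$ so that precisely the single term $r\D_{m,r}(n,u)$ (respectively $rW_{m,r}(n,k)$) survives.
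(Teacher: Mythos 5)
Your proposal is correct and follows exactly the paper's route: the paper derives Corollary~\ref{Dowlingrecur} precisely by setting $h=1$ in Theorem~\ref{teoSpivey} and Corollary~\ref{coroS}, leaving the mechanical evaluation (which you carry out, including the $0^0=1$ convention and the base values $W_{m,r}(1,0)=r$, $W_{m,r}(1,1)=1$) to the reader.
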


The above corollary was proved in \cite{CJ} by using Riordan arrays.
\begin{theorem}\label{teorio1}
The $r$-Dowling polynomials satisfy the following  formula
$$\D_{m,r}(n,u)=\sum_{j=0}^n\binom nj(r-s)^{n-j} D_{m,s}(j,u).$$
\end{theorem}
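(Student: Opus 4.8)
The plan is to derive the identity at the level of exponential generating functions, exactly in the style of the proof of Theorem~\ref{theo:gfdowling}, and then read off coefficients. First I would recall the closed form
\[
\sum_{n=0}^\infty \D_{m,r}(n,u)\frac{t^n}{n!}=\exp\!\left(rt+u\frac{e^{mt}-1}{m}\right)
\]
established in Theorem~\ref{theo:gfdowling}. The decisive observation is that the dependence on the shift parameter sits entirely in the linear term $rt$ of the exponent, so that
\[
\exp\!\left(rt+u\frac{e^{mt}-1}{m}\right)=e^{(r-s)t}\,\exp\!\left(st+u\frac{e^{mt}-1}{m}\right).
\]
In words, the generating function of $\{\D_{m,r}(n,u)\}_n$ is obtained from that of $\{\D_{m,s}(n,u)\}_n$ simply by multiplying by $e^{(r-s)t}$.

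The remaining step is routine: I would expand $e^{(r-s)t}=\sum_{k\ge 0}(r-s)^k t^k/k!$, form the Cauchy product with $\sum_j \D_{m,s}(j,u)\,t^j/j!$, and extract the coefficient of $t^n/n!$. The binomial coefficient $\binom nj$ emerges in the usual way from the product of two exponential generating functions, yielding precisely
\[
\D_{m,r}(n,u)=\sum_{j=0}^n\binom nj (r-s)^{n-j}\D_{m,s}(j,u).
\]

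Alternatively, and more in the spirit of the combinatorial differential calculus used throughout, I could argue directly with $\Ha_{m,r}(t)=e^{t\Ds}yx^r$. Since $\Ds x=x$ we have $e^{t\Ds}x^{r-s}=x^{r-s}e^{(r-s)t}$, and because $e^{t\Ds}$ is multiplicative this gives $\Ha_{m,r}(t)=x^{r-s}e^{(r-s)t}\Ha_{m,s}(t)$; cancelling the common factor $yx^s$ via Lemma~\ref{dowlingh} reproduces the generating-function identity above. This mirrors exactly the computation $\Ha_{m,r+l}(t)=x^l e^{lt}\Ha_{m,r}(t)$ already used, and indeed the present theorem is the same identity with $l=r-s$.

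There is no genuine obstacle; the only point worth a word of care is that here $r-s$ need not be a non-negative integer, whereas the earlier theorem assumed $l\ge 0$ (so that $x^l$, and the associated composition $\mathbf B$, carry a combinatorial meaning). This causes no difficulty, since the generating-function identity is a purely formal equality of power series in $t$ valid for every pair of integers $r,s$; equivalently, both sides of the claimed formula are polynomials in $r-s$, so the equality extends from the range $r\ge s$ to all $r,s$ by polynomial identity.
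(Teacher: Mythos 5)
Your proposal is correct, and in fact it contains the paper's own proof as its ``alternative'' route. The paper argues exactly as in your second paragraph: by multiplicativity of $e^{t\Ds}$ and Lemma \ref{dowlingh}, it writes $\Ha_{m,r}(t)=e^{t\Ds}yx^s\,e^{t\Ds}x^{r-s}=x^{r-s}e^{(r-s)t}\Ha_{m,s}(t)$, then takes the Cauchy product and equates coefficients. Your primary route instead starts from the closed form of Theorem \ref{theo:gfdowling} and factors $\exp\bigl(rt+u\tfrac{e^{mt}-1}{m}\bigr)=e^{(r-s)t}\exp\bigl(st+u\tfrac{e^{mt}-1}{m}\bigr)$; both proofs hinge on the same factorization and the same Cauchy-product step, so the difference lies only in how the factorization is obtained. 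Your route is slightly more elementary, needing nothing beyond the already-established generating function and formal power series algebra, and it has one concrete advantage: when $r<s$ the paper's manipulation introduces $x^{r-s}$ with a negative exponent, which strictly speaking requires extending the operator calculus to Laurent monomials (the identity $e^{t\Ds}x^{a}=x^{a}e^{at}$ does still hold formally, since $x\partial_x x^{a}=a x^{a}$, but the paper does not comment on this point), whereas your derivation never leaves the ring of formal power series in $t$ with polynomial coefficients in $u$. Your closing observation --- that the present theorem is the earlier identity $\Ha_{m,r+l}(t)=x^{l}e^{lt}\Ha_{m,r}(t)$ specialized to $l=r-s$ and extended to possibly negative $l$ by polynomial identity in $r-s$ --- is also a valid third way to conclude, and makes explicit a connection the paper leaves implicit.
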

\begin{proof}
From Lemma \ref{dowlingh} we have
\begin{align}
\Ha_{m,r}(t)=e^{t\Ds}yx^r=e^{t\Ds} yx^s e^{t\Ds}x^{r-s}=x^{r-s}e^{(r-s)t}\Ha_{m,s}(t).
\end{align}
By the Cauchy product we obtain
$$\widehat{\D}_{m,r}(n,x,y)=x^{r-s}\sum_{j=0}^n\binom{n}{j}(r-s)^{n-j}\widehat{\D}_{m,s}(j,x,y).$$
Therefore, we get the desired result.
\end{proof}

In particular  if $s=1$ we have (see Theorem 5.2 of \cite{CJ})
$$\D_{m,r}(n,x)=\sum_{j=0}^n(r-1)^{n-j}\binom nj D_{m}(j,x),$$
where $D_{m}(n,u)$ are the Dowling polynomials, i.e.,

$$D_{m}(n,u)=\sum_{k=0}^{n}W_{m}(n,k)u^k.$$

\begin{corollary}
The $r$-Whitney numbers satisfy the following  formula
\begin{align}
W_{m,r}(n,k)=\sum_{j=0}^n\binom{n}{j}(r-s)^{n-j}W_{m,s}(j,k).
\end{align}
In particular  if $s=1$ we have
\begin{align}
W_{m,r}(n,k)=\sum_{j=0}^n\binom{n}{j}(r-1)^{n-j}W_{m}(j,k).
\end{align}
\end{corollary}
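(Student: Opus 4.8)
The plan is to deduce this corollary directly from Theorem \ref{teorio1} by a coefficient extraction in the variable $u$, since the $r$-Dowling polynomials are precisely the generating polynomials whose coefficients are the $r$-Whitney numbers of the second kind. First I would recall that by definition $\D_{m,r}(n,u)=\sum_{k=0}^{n}W_{m,r}(n,k)u^{k}$, and likewise $\D_{m,s}(j,u)=\sum_{k=0}^{j}W_{m,s}(j,k)u^{k}$, with the convention $W_{m,s}(j,k)=0$ whenever $j<k$ so that each sum may be extended to run up to $n$.

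Next I would substitute these expansions into the identity of Theorem \ref{teorio1},
\begin{equation*}
\sum_{k=0}^{n}W_{m,r}(n,k)u^{k}=\sum_{j=0}^{n}\binom{n}{j}(r-s)^{n-j}\sum_{k=0}^{j}W_{m,s}(j,k)u^{k}.
\end{equation*}
Interchanging the order of summation on the right-hand side and grouping by powers of $u$, the coefficient of $u^{k}$ becomes $\sum_{j=0}^{n}\binom{n}{j}(r-s)^{n-j}W_{m,s}(j,k)$. Comparing this with the coefficient $W_{m,r}(n,k)$ on the left-hand side yields the first formula at once. For the special case one sets $s=1$ and uses the normalization $W_{m,1}(j,k)=W_{m}(j,k)$ recorded in the introduction, which produces the second identity.

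There is no genuine obstacle here: once Theorem \ref{teorio1} is available, the corollary is a mechanical equating of coefficients, the only point requiring mild care being the bookkeeping of the summation ranges (ensuring that the vanishing of $W_{m,s}(j,k)$ for $j<k$ legitimizes writing every inner sum over the full range $0\le j\le n$).
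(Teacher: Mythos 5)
Your proposal is correct and follows essentially the same route as the paper: the paper likewise deduces the corollary from Theorem \ref{teorio1} by equating coefficients, working with the monomial form $\sum_{k}W_{m,r}(n,k)\,y x^{mk+r}$ rather than the coefficient of $u^{k}$ (the two are equivalent via the substitution $u=x^{m}$ of Lemma \ref{dowlingh}). Your bookkeeping of the summation ranges, using $W_{m,s}(j,k)=0$ for $j<k$, matches the paper's implicit reindexing of the double sum.
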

\begin{proof}
From Theorem \ref{teorio1}
\begin{align*}
\sum_{k=0}^nW_{m,r}(n,k)yx^{mk+r}&=x^{r-s}\sum_{j=0}^r\binom nj (r-s)^{n-j} \sum_{k=0}^j W_{m,s}(j,k)yx^{km+s}\\
&=\sum_{k=0}^n \sum_{j=0}^n \binom nj (r-s)^{n-j} W_{m,s}(j,k)yx^{km+r}.
\end{align*}
By equating coefficients,  we obtain the result.
\end{proof}

\section{The Sheffer family of $[m]$-Touchard polynomials and  $r$-Dowling polynomials}\label{sec:sheffer}
From the generating function of the $r$-Dowling polynomials (Theorem \ref{theo:gfdowling}), making $r=0$, we get the generating function of the $[m]$-Touchard polynomials
 \begin{equation}\label{mTouchard}
 \sum_{n=0}^{\infty}T^{[m]}_n(x)\frac{t^n}{n!}=\exp\left(x \frac{e^{mt}-1}{m}\right).
 \end{equation}
From this is immediate that they are of binomial type (\cite{Rota}). From Equation (\ref{dowtou}) we have the following identity relating them with the classical Dowling polynomials
\begin{equation*}
\D_{m,1}(n,x)=\D_m(n, x)=\sum_{k=0}^{n}\binom{n}{k}T_k^{[m]}(x).
\end{equation*}
and hence,
\begin{equation*}
T_n^{[m]}(x)=\sum_{k=0}^{n}\binom{n}{k}(-1)^{n-k}\D_{m}(k,x).
\end{equation*}
Our goal is to find the umbral inverse of $[m]$-Touchard sequence, i.e, the polynomial family $\widehat{T}^{[m]}_n(x)=\sum_{k=1}^{n}c_{n,k}x^k$ satisfying
\begin{eqnarray*}
T^{[m]}_n(\widehat{\mathbf{T}}^{[m]}(x))&:=&\sum_{k=1}^n S^{[m]}(n,k)\widehat{T}^{[m]}_k(x)=x^n,\; \forall n=0,1,2\dots.\\
\widehat{T}^{[m]}_n(\mathbf{T}^{[m]}(x))&:=&\sum_{k=1}^nc_{n,k}T^{[m]}_k(x)=x^n,\; \forall n=0,1,2\dots.
\end{eqnarray*}

Let $$\mathscr{O}(t):=\frac{e^{mt}-1}{m},$$ and consider its compositional inverse $$\overline{\mathscr{O}}(t)=\ln(1+mx)^{\frac{1}{m}}.$$

Let $\partial_x$ the derivative operator acting on the polynomial ring $\mathbb{C}[x]$. Let $\mathscr{O}(\partial_x)$ and $\overline{\mathscr{O}}(\partial_x)$ be the shift-invariant operators acting also on the polynomial ring $\mathbb{C}[x]$,
\begin{align*}
\mathscr{O}(\partial_x)&=\frac{e^{m\partial_x}-I}{m}=\frac{1}{m}\sum_{k=1}^{\infty}m^{k}\frac{\partial_x^k}{k!}=\frac{E^m-I}{m},\\ 
\overline{\mathscr{O}}(\partial_x)&=\ln(1+m\partial_x)^{\frac{1}{m}}=\frac{1}{m}\sum_{k=1}^{\infty}(-1)^k m^{k}(k-1)!\frac{\partial_x^k}{k!}=\sum_{k=1}^{\infty}(-1)^k m^{k-1}(k-1)!\frac{\partial_x^k}{k!}.
	\end{align*}
Here we denote by $E^a$ is the shift operator $E^a p(x)=p(x+a)$.
By the classical theory of umbral calculus, $\widehat{T}^{[m]}_n(x)$ and $T^{[m]}_n(x)$ are  respectively the sequences associated  to the operators $\frac{E^m-I}{m}$, and $\ln(1+m\partial_x)^{\frac{1}{m}}$,
\begin{eqnarray*}
\frac{E^m-I}{m}\widehat{T}^{[m]}_n(x)=\frac{\widehat{T}^{[m]}_n(x+m)-\widehat{T}^{[m]}_n(x)}{m}&=&n\widehat{T}^{[m]}_{n-1}(x)\\\ln(1+m\partial_x)^{\frac{1}{m}}T_n^{[m]}(x)&=&nT_{n-1}^{[m]}(x).
\end{eqnarray*}
The derivatives of the formal power series $\mathscr{O}(x)$ and $\overline{\mathscr{O}}(x)$ are respectively $\mathscr{O}'(x)=e^{mx}$ and $(\overline{\mathscr{O}}(x))'= \frac{1}{1+mx}$. The recurrence formula for families of binomial type (\cite{Rota} Corollary 1 of Theorem 8),  gives us
\begin{eqnarray}\label{eq:recufactorial}
\widehat{T}^{[m]}_n(x)&=&xE^{-m}\widehat{T}^{[m]}_{n-1}(x)=x\widehat{T}^{[m]}_{n-1}(x-m),\label{eq:reqtouc}\\
T^{[m]}_n(x)&=&x(1+m\partial_x)T^{[m]}_{n-1}(x)=xT^{[m]}_{n-1}(x)+mx\partial_xT^{[m]}_{n-1}(x).
\end{eqnarray}
From Equation (\ref{eq:recufactorial}) we obtain the polynomial family
\begin{equation}
\widehat{T}^{[m]}_n(x)=x(x-m)(x-2m)\cdots (x-(n-1)m).
\end{equation}

From Equation (\ref{eq:reqtouc}) we get the recurrence for $[m]$-Stirling numbers of the second kind
\begin{equation}
S^{[m]}(n,k)=S^{[m]}(n-1,k-1)+kmS^{[m]}(n-1,k).
\end{equation}\\
We define the $[m]$-Stirling numbers of the first kind as the coefficients connecting $\widehat{T}^{[m]}_n(x)$ with the powers.
\begin{definition}
We define the $[m]$-Stirling numbers of the first kind $s^{[m]}(n,k)$ as the coefficients in the expansion of $\widehat{T}^{[m]}_n(x)=x(x-m)(x-2m)\cdots (x-(n-1)m)$ in terms of the power sequence,
	\begin{equation*}
	x(x-m)(x-2m)\cdots(x-(n-1)m)=\sum_{k=1}^n s^{[m]}(n,k)x^k.
	\end{equation*}
\end{definition}

\begin{remark}\label{sheffer}
From the  generating function in Theorem \ref{theo:gfdowling},
\begin{equation*}
\sum_{n=0}^{\infty}\mathcal{D}_{m,r}(n,x)\frac{t^n}{n!}=e^{rx}\exp\left( x \frac{e^{mt}-1}{m}\right),
\end{equation*}
we get that the $r$-Dowling polynomials, $$\mathcal{D}^{[m,r]}_n(x):=\mathcal{D}_{m,r}(n,x)$$ are a Sheffer family relative to the $[m]$-Touchard, associated to the pair of generating functions $\left(\frac{1}{(1+mx)^{r/m}},\ln(1+mx)^{1/m}\right)$,
$$\left\langle\frac{1}{(1+mx)^{r/m}},\ln(1+mx)^{1/m}\right\rangle=\left\langle e^{rx},\frac{e^{mx}-1}{m}\right\rangle^{-1},$$
being the inverse of $\left\langle e^{rx},\frac{e^{mx}-1}{m}\right\rangle$ as an exponential Riordan array, Equation (\ref{invRiordan}) bellow. For that, see \cite{Rota}, and \cite{RomanB} Theorem 2.3.4. Hence, we have the binomial identity (see \cite{RomanB}, Theorem 2.3.9)
\begin{equation}
\mathcal{D}^{[m,r]}_n(x+y)=\sum_{k=0}^{n}\binom{n}{k}\mathcal{D}^{[m,r]}_k(x)T^{[m]}_{n-k}(y).
\end{equation}
\end{remark}
 Its umbral inverse $\widehat{\D}^{[m,r]}_n(x)$ is Sheffer relative to $\widehat{T}^{[m]}_n(x)$. It is associated to the pair
$$\left(e^{rx},\frac{e^{mx}-1}{m}\right).$$

From that we get next theorem
\begin{theorem}
	The umbral inverse of the $r$-Dowling Sheffer sequence is equal to
	\begin{equation}\label{eq:facdecr}
	\widehat{\D}^{[m,r]}_n(x)=E^{-r}\widehat{T}^{[m]}_n(x)=(x-r)(x-r-m)\cdots(x-r-(k-1)m).\end{equation}
	The $r$-Dowling sequence satisfies the identity
	\begin{equation}\label{eq:dowlstir}
	\D^{[m,r]}_n(x)=\sum_{k=0}^n\frac{r(r-m)(r-2m)\cdots(r-(k-1)m)}{k!}\partial_x^kT^{[m]}_n(x).
	\end{equation}
	\end{theorem}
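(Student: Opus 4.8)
The plan is to prove both identities directly from the exponential generating functions already in hand, so that no particular umbral-calculus bookkeeping convention needs to be invoked; the single algebraic fact that makes everything collapse is the identity $1+m\,\mathscr{O}(t)=e^{mt}$. First I would record the three generating functions I need. The explicit product $\widehat{T}^{[m]}_n(x)=x(x-m)\cdots(x-(n-1)m)$ gives $\sum_{n\ge0}\widehat{T}^{[m]}_n(x)\,t^n/n!=(1+mt)^{x/m}$; Equation (\ref{mTouchard}) gives $\sum_{n\ge0}T^{[m]}_n(x)\,t^n/n!=\exp(x\,\mathscr{O}(t))$; and Theorem \ref{theo:gfdowling}, with $u=x$, gives $\sum_{n\ge0}\D^{[m,r]}_n(x)\,t^n/n!=e^{rt}\exp(x\,\mathscr{O}(t))$.

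For Equation (\ref{eq:facdecr}), I would use that $\widehat{\D}^{[m,r]}_n$ is Sheffer relative to $\widehat{T}^{[m]}_n$ for the pair $\left(e^{rx},\mathscr{O}(x)\right)$, so its generating function is $\tfrac{1}{g(\overline{\mathscr{O}}(t))}\,e^{x\,\overline{\mathscr{O}}(t)}$ with $g(s)=e^{rs}$ and $\overline{\mathscr{O}}(t)=\ln(1+mt)^{1/m}$. Since $e^{\overline{\mathscr{O}}(t)}=(1+mt)^{1/m}$, this equals $(1+mt)^{-r/m}(1+mt)^{x/m}=(1+mt)^{(x-r)/m}$, which is exactly $\sum_{n\ge0}\widehat{T}^{[m]}_n(x-r)\,t^n/n!$. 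Comparing coefficients yields $\widehat{\D}^{[m,r]}_n(x)=\widehat{T}^{[m]}_n(x-r)=E^{-r}\widehat{T}^{[m]}_n(x)=(x-r)(x-r-m)\cdots(x-r-(n-1)m)$, which is Equation (\ref{eq:facdecr}).

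For Equation (\ref{eq:dowlstir}), I would first establish the operator identity $\D^{[m,r]}_n(x)=(1+m\partial_x)^{r/m}T^{[m]}_n(x)$. Applying $(1+m\partial_x)^{r/m}$ to the generating function $\exp(x\,\mathscr{O}(t))$ and using $\partial_x\exp(x\,\mathscr{O}(t))=\mathscr{O}(t)\exp(x\,\mathscr{O}(t))$, the operator acts as multiplication by $(1+m\,\mathscr{O}(t))^{r/m}$; since $1+m\,\mathscr{O}(t)=e^{mt}$, this factor is $e^{rt}$, and the result is $e^{rt}\exp(x\,\mathscr{O}(t))$, the $r$-Dowling generating function above. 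Expanding the formal binomial series $(1+m\partial_x)^{r/m}=\sum_{k\ge0}\binom{r/m}{k}m^k\partial_x^k$ and noting that $\binom{r/m}{k}m^k=\dfrac{r(r-m)(r-2m)\cdots(r-(k-1)m)}{k!}$, the sum truncates at $k=n$ because $\partial_x^kT^{[m]}_n=0$ for $k>n$, giving precisely Equation (\ref{eq:dowlstir}).

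I expect the only delicate points to be interpretive rather than computational: one must read the Sheffer pair correctly, so that it is the factor $1/g(\overline{\mathscr{O}}(t))$ (and not $1/g(t)$) that produces the argument shift to $x-r$, and one must treat the power-series-in-$\partial_x$ manipulations as formal identities that become finite sums when applied to polynomials. The factorial and binomial bookkeeping is routine, and both computations hinge on the same collapse $1+m\,\mathscr{O}(t)=e^{mt}$, so once that is observed the two parts are essentially parallel.
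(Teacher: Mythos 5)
Your proof is correct, and it reaches the theorem through the same two operator identities that the paper uses, namely $\widehat{\D}^{[m,r]}_n(x)=E^{-r}\widehat{T}^{[m]}_n(x)$ and $\D^{[m,r]}_n(x)=(1+m\partial_x)^{r/m}T^{[m]}_n(x)$, followed by the same binomial expansion with $\binom{r/m}{k}m^k=\frac{r(r-m)\cdots(r-(k-1)m)}{k!}$ and truncation at $k=n$. The difference lies in how those identities are justified: the paper simply invokes Theorem 2.3.6 of \cite{RomanB} twice (a Sheffer sequence for a pair $(g(t),f(t))$ is $g(\partial_x)^{-1}$ applied to the associated sequence of $f$), whereas you re-derive both facts from generating functions --- the Sheffer generating-function formula giving $(1+mt)^{(x-r)/m}$ for Equation (\ref{eq:facdecr}), and the collapse $1+m\,\mathscr{O}(t)=e^{mt}$, which turns $(1+m\partial_x)^{r/m}$ into multiplication by $e^{rt}$, for Equation (\ref{eq:dowlstir}). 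What your version buys is self-containedness: the operator identities are checked rather than quoted, and the formal issues (an infinite series in $\partial_x$ acting on polynomials, exchange with the sum over $n$) are addressed explicitly, so the argument does not depend on getting Roman's conventions right by citation alone. What the paper's version buys is brevity and a clean placement of the result inside standard umbral-calculus machinery. A further small point in your favor: you correctly end the product in Equation (\ref{eq:facdecr}) at $(x-r-(n-1)m)$; the paper's displayed formula contains a typo, writing $k-1$ where $n-1$ is meant.
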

\begin{proof}
	Since $\widehat{\D}^{[m,r]}_n(x)$ is associated to $(e^{rx},\frac{e^{mx}-1}{m})$, it is equal to the inverse of the operator $e^{r\partial_x}=E^{r}$ applied to $\widehat{T}^{[m]}_n(x)$ (see \cite{RomanB}, Theorem 2.3.6). For the same reason we have that
	\begin{equation*}
	\D^{[m,r]}_n(x)=(1+m\partial_x)^{r/m}T^{[m]}_n(x).
	\end{equation*}
	Expanding the operator $(1+m\partial_x)^{r/m}$ by the binomial formula we obtain the result.
\end{proof}

\section{Some Applications from Riordan Arrays}

The $r$-Whitney numbers can be defined using exponential Riordan arrays.   We recall that an infinite lower triangular matrix  is called a Riordan array \cite{Riordan} if its $k$th column satisfies the generating function  $g(z)\left(f(z)\right)^k$ for $k \ge 0$, where
 $g(z)$ and $f(z)$ are formal  power series with $g(0) \neq 0$, $f(0)=0$ and $f'(0)\neq 0$.  The matrix corresponding to the pair
 $f(z), g(z)$ is denoted by  $(g(z),f(z))$.
The product of two Riordan arrays $(g(z),f(z))$ and $(h(z),l(z))$ is defined by
 $$(g(z),f(z))*(h(z),l(z))=\left(g(z)h\left(f(z)\right), l\left(f(z)\right)\right).$$

 We recall that the set of all Riordan matrices is a group  under the operator $``*"$ \cite{Riordan}. The identity element is $I=(1,z)$, and the inverse of $(g(z),f(z))$ is
 \begin{equation}\label{invRiordan}
 (g(z), f(z))^{-1}=\left(1/\left(g\circ \overline{f}\right)(z), \overline{f}(z)\right),
\end{equation}
Sometimes, it is useful to use exponential generating functions instead of ordinary generating functions when we apply a Riordan array method. We call the resulting array an exponential Riordan array and we denote it by $\langle g(z),f(z) \rangle$. Its column $k$ has generating function $g(z)\left(f(z)\right)^k/k!, k = 0, 1, 2, \dots$   (cf.  \cite{WW}).\\

The $r$-Whitney  numbers of the second kind are given by the exponential Riordan array:
$$W_2:=\left[W_{m,r}(n,k)\right]_{n,k\geq 0}=\left\langle e^{rx}, \frac{e^{mx}-1}{m}\right\rangle.$$
Therefore,  as we have already seen in Remark \ref{sheffer}, the $r$-Dowling polynomial $\D^{[m,r]}_n(x)$  is a Sheffer sequence for $$\left((1+mx)^{-r/m},\ln(1+mx)^{1/m} \right).$$

In this section we give some explicit relations between the $r$-Dowling polynomials and the Bernoulli and  Euler polynomials by using the connection constants $a_{n,k}$ in the expression $r_n(x)=\sum_{k=0}^na_{n,k}s_{k}(x)$, where the polynomials $r_n(x)$ and $s_n(x)$ are Sheffer sequences. These constants can be determined by the umbral method \cite[pp. 131]{RomanB} or equivalent  by Riordan arrays \cite[Theorem 6.4]{WW}. In particular, let $s_n(x)$ and $r_n(x)$ be Sheffer for $(g(t), f(t))$ and $(h(t), l(t))$, respectively.  If $r_n(x)=\sum_{k=0}^na_{n,k}s_{k}(x)$, then $a_{n,k}$ is the entry $(n,k)$-th of the Riordan array
\begin{align}\label{concoef}
\left(\frac{g(\overline{l}(t))}{h(\overline{l}(t))}, f(\overline{l}(t))\right).
\end{align}

The Bernoulli polynomials, $\B_n(x)$, are defined by the exponential generating function
$$\sum_{n=0}^\infty \B_n(x)\frac{t^n}{n!}=\frac{te^{xt}}{e^t-1}.$$
The Bernoulli numbers, $\B_n$, are define by $\B_n:=\B_n(0)$. Moreover, the polynomials $\B_n(x)$ are Sheffer for $\left(\frac{e^t-1}{t},t\right)$ (cf. \cite{RomanB, HM}).  The $r$-Whitney numbers of the first kind are defined  by  exponential  generating function \cite{Mezo}:
\begin{align}\label{rWhitney1}
\sum_{n=k}^\infty w_{m,r}(n,k)\frac{z^n}{n!}&=(1+mz)^{-\frac{r}{m}}\frac{\ln^k(1+mz)}{m^kk!}.
\end{align}

\begin{theorem}
For $n\geq 0$ we have
\begin{align}\label{berneq}
\B_n(x)=\sum_{k=0}^n\sum_{l=k}^n\binom nl \B_{n-l}w_{m,r}(l,k)\D_k^{[m,r]}(x).
\end{align}
\end{theorem}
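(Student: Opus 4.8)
The plan is to read off the connection coefficients directly from the connection-constant Riordan array in Equation (\ref{concoef}) and then recognize the resulting entry as a Cauchy product of two already-known exponential generating functions. In the notation of (\ref{concoef}), I would take the \emph{basis} family to be $s_n(x)=\D_n^{[m,r]}(x)$, which (by Remark \ref{sheffer}) is Sheffer for $(g(t),f(t))$ with $g(t)=(1+mt)^{-r/m}$ and $f(t)=\tfrac{1}{m}\ln(1+mt)$, and the \emph{expanded} family to be $r_n(x)=\B_n(x)$, which is Sheffer for $(h(t),l(t))$ with $h(t)=\tfrac{e^{t}-1}{t}$ and $l(t)=t$. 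Because $l(t)=t$ is the identity, its compositional inverse is simply $\overline{l}(t)=t$, which is what makes the whole computation clean.

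Writing $\B_n(x)=\sum_{k=0}^{n}a_{n,k}\D_k^{[m,r]}(x)$, formula (\ref{concoef}) then asserts that $a_{n,k}$ is the $(n,k)$-th entry of the exponential Riordan array
\[
\left(\frac{g(\overline{l}(t))}{h(\overline{l}(t))},\, f(\overline{l}(t))\right)=\left((1+mt)^{-r/m}\frac{t}{e^{t}-1},\, \frac{1}{m}\ln(1+mt)\right).
\]
Since the $k$-th column of an exponential Riordan array $(G,F)$ carries the generating function $G(t)F(t)^k/k!$, this gives
\[
a_{n,k}=n!\,[t^{n}]\left\{\frac{t}{e^{t}-1}\cdot (1+mt)^{-r/m}\frac{\ln^{k}(1+mt)}{m^{k}k!}\right\}.
\]

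The next step is to factor the bracketed series as a product of two familiar pieces. The second factor is precisely the exponential generating function appearing in (\ref{rWhitney1}), so $(1+mt)^{-r/m}\ln^{k}(1+mt)/(m^{k}k!)=\sum_{l\ge k}w_{m,r}(l,k)\,t^{l}/l!$, while the first factor is the defining series of the Bernoulli numbers, $t/(e^{t}-1)=\sum_{j\ge 0}\B_j\,t^{j}/j!$. Taking the Cauchy product of these two exponential series and extracting $n!\,[t^{n}]$ yields $a_{n,k}=\sum_{l=k}^{n}\binom{n}{l}\B_{n-l}\,w_{m,r}(l,k)$, and substituting this back into $\B_n(x)=\sum_{k}a_{n,k}\D_k^{[m,r]}(x)$ gives exactly (\ref{berneq}).

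The only genuine subtlety — and the step most prone to error — is the bookkeeping in (\ref{concoef}): one must assign $(g,f)$ to the \emph{basis} family $\D_k^{[m,r]}$ and $(h,l)$ to the \emph{expanded} family $\B_n$, and exploit $\overline{l}(t)=t$ rather than inverting $l$ nontrivially. Once the array is correctly identified, the key insight is recognizing its column generating function as the product of the Bernoulli EGF with the $r$-Whitney first-kind EGF (\ref{rWhitney1}); after that, the identity drops out of a routine Cauchy-product computation.
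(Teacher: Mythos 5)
Your proposal is correct and follows essentially the same route as the paper: both invoke the connection-constants array \eqref{concoef} with $\D_k^{[m,r]}$ as the basis family and $\B_n$ Sheffer for $\left(\frac{e^t-1}{t},t\right)$, exploit $\overline{l}(t)=t$, recognize the column generating function as the product of the Bernoulli-number EGF with the $r$-Whitney EGF \eqref{rWhitney1}, and finish with a Cauchy product. If anything, your write-up is slightly more careful than the paper's, which drops the $n!$ normalization in the coefficient extraction and mistakenly writes $\B_n(x)$ where the Bernoulli numbers $\B_n$ are meant.
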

\begin{proof}
If $\B_n(x)=\sum_{k=0}^na_{n,k}\D_k^{[m,r]}(x)$, then from \eqref{concoef} and \eqref{rWhitney1} we get
\begin{align*}
a_{n,k}&=\frac{1}{k!}[t^n]\left( (1+mt)^{-r/m} \frac{t}{e^t-1} \cdot \left(\ln(1+mt)^{1/m} \right)^k\right)\\
&=[t^n]\left((1+mt)^{-r/m} \frac{\ln^k(1+mt)}{m^kk!}  \frac{t}{e^t-1} \right)\\
&=[t^n]\left( \left( \sum_{n=k}^\infty w_{m,r}(n,k)\frac{t^n}{n!} \right) \left(\sum_{n=0}^\infty \B_n(x)\frac{t^n}{n!}\right)  \right)\\
&=[t^n]\left(\sum_{n=0}^\infty \sum_{l=0}^n \binom nl \B_{n-l}w_{m,r}(l,k) \right)\frac{z^n}{n!}.
\end{align*}
Therefore, it is clear \eqref{berneq}.
\end{proof}

The Euler polynomials, $\E_n(x)$, are defined by the exponential generating function
$$\sum_{n=0}^\infty \E_n(x)\frac{t^n}{n!}=\frac{2e^{xt}}{e^t+1}.$$
The numbers $\E_n$, are define by $\E_n:=\E_n(0)$. Moreover, the polynomials $\E_n(x)$ are Sheffer for $\left(\frac{e^t+1}{2},t\right)$.

From a similar argument as in above theorem  we get the following theorem.
\begin{theorem}
For $n\geq 0$ we have
\begin{align*}
\E_n(x)=\sum_{k=0}^n\sum_{l=k}^n\binom nl \E_{n-l}w_{m,r}(l,k)\D_k^{[m,r]}(x).
\end{align*}
\end{theorem}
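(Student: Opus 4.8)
The plan is to follow verbatim the argument used for the Bernoulli polynomials, substituting the Euler Sheffer data for the Bernoulli one. The two structural facts I would invoke are that $\E_n(x)$ is Sheffer for $\left(\frac{e^t+1}{2},t\right)$, as recorded just above, and that the $r$-Dowling polynomials $\D_k^{[m,r]}(x)$ are Sheffer for $\left((1+mx)^{-r/m},\ln(1+mx)^{1/m}\right)$, as established in Remark \ref{sheffer}. Writing $\E_n(x)=\sum_{k=0}^n a_{n,k}\D_k^{[m,r]}(x)$, I would compute the connection constants $a_{n,k}$ from \eqref{concoef}, taking $(g,f)=\left((1+mt)^{-r/m},\ln(1+mt)^{1/m}\right)$ for the target basis and $(h,l)=\left(\frac{e^t+1}{2},t\right)$ for $\E_n$.

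Since $l(t)=t$, its compositional inverse is again $\overline{l}(t)=t$, so formula \eqref{concoef} collapses to the single coefficient extraction
\begin{equation*}
a_{n,k}=\frac{1}{k!}[t^n]\left((1+mt)^{-r/m}\,\frac{2}{e^t+1}\,\bigl(\ln(1+mt)^{1/m}\bigr)^k\right).
\end{equation*}
I would then regroup the factors to expose two known exponential generating functions: by \eqref{rWhitney1} the product $(1+mt)^{-r/m}\ln^k(1+mt)/(m^k k!)$ equals $\sum_{l\ge k}w_{m,r}(l,k)\,t^l/l!$, the egf of the $r$-Whitney numbers of the first kind, while $\frac{2}{e^t+1}=\sum_{l\ge0}\E_l\,t^l/l!$ is the egf of the Euler numbers $\E_l=\E_l(0)$.

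Finally, I would take the Cauchy product of these two series and read off the coefficient of $t^n$, obtaining $a_{n,k}=\sum_{l}\binom{n}{l}\E_{n-l}w_{m,r}(l,k)$; because $w_{m,r}(l,k)=0$ whenever $l<k$, the summation index is effectively restricted to $k\le l\le n$, and substituting back into $\E_n(x)=\sum_k a_{n,k}\D_k^{[m,r]}(x)$ produces the claimed double sum. There is no genuine obstacle distinct from the Bernoulli case: the only change is replacing the Bernoulli number generating function $\frac{t}{e^t-1}$ by the Euler one $\frac{2}{e^t+1}$, after which the Cauchy-product bookkeeping is identical.
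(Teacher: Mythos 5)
Your proposal is correct and is exactly the argument the paper intends: the paper proves the Bernoulli case via the connection-constants formula \eqref{concoef} and the egf \eqref{rWhitney1}, and for the Euler case simply states that it follows ``from a similar argument,'' which is precisely the substitution of the Sheffer pair $\left(\frac{e^t+1}{2},t\right)$ and the egf $\frac{2}{e^t+1}$ that you carry out. Your handling of the Cauchy product and the vanishing of $w_{m,r}(l,k)$ for $l<k$ matches the paper's bookkeeping in the Bernoulli proof.
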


In the following theorem we analyze the connecting coefficients
$$\D_n^{[m,r]}(x)=\sum_{k=0}^na_{n,k}\B_k(x).$$

\begin{theorem}
For $n\geq 0$ we have
\begin{align}\label{berneq2}
\D_n^{[m,r]}(x)=\frac{1}{n+1}\sum_{k=0}^{n}\sum_{l=0}^{n-k}\sum_{s=0}^l\binom{n+1}{l+1}\binom{l+1}{s+1}W_{m,r}(n-l,k)m^{l-s} T_{s+1}^{[m]}(1)\B_{l-s} \B_k(x).
\end{align}
\end{theorem}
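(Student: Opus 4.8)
The plan is to reproduce the mechanism of the Bernoulli and Euler expansions proved above, but now expanding the $r$-Dowling polynomials in the Bernoulli basis, $\D_n^{[m,r]}(x)=\sum_{k=0}^n a_{n,k}\B_k(x)$. Since $\D_n^{[m,r]}(x)$ is Sheffer for $(h(t),l(t))=\left((1+mt)^{-r/m},\ln(1+mt)^{1/m}\right)$ by Remark \ref{sheffer}, and $\B_k(x)$ is Sheffer for $(g(t),f(t))=\left(\frac{e^t-1}{t},t\right)$, the connection rule \eqref{concoef} identifies $a_{n,k}$ with the $(n,k)$-entry of the exponential Riordan array $\left(\frac{g(\overline l(t))}{h(\overline l(t))},f(\overline l(t))\right)$; reading this entry with the exponential normalization gives $a_{n,k}=\frac{n!}{k!}[t^n]\left(\frac{g(\overline l(t))}{h(\overline l(t))}\,(f(\overline l(t)))^k\right)$, exactly as in the computation leading to \eqref{berneq}.

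First I would compute the ingredients. The compositional inverse of $l(t)=\ln(1+mt)^{1/m}$ is $\overline l(t)=\mathscr{O}(t)=\frac{e^{mt}-1}{m}$, whence $h(\overline l(t))=(1+m\overline l(t))^{-r/m}=e^{-rt}$, $f(\overline l(t))=\overline l(t)$, and $g(\overline l(t))=\frac{e^{\overline l(t)}-1}{\overline l(t)}$. Substituting,
\[\frac{g(\overline l(t))}{h(\overline l(t))}\,(f(\overline l(t)))^k=e^{rt}\left(\frac{e^{mt}-1}{m}\right)^{k}\cdot\frac{\exp\!\left(\tfrac{e^{mt}-1}{m}\right)-1}{\tfrac{e^{mt}-1}{m}}.\]
The first factor is, by \eqref{gfunW2}, precisely $k!\sum_{N\ge k}W_{m,r}(N,k)\frac{t^N}{N!}$, which is the source of the numbers $W_{m,r}(n-l,k)$.

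The key step is to expand the remaining factor $\frac{e^{u}-1}{u}$, with $u=\frac{e^{mt}-1}{m}$, as a product of two recognizable series. By the Touchard generating function \eqref{mTouchard} at $x=1$ one has $e^{u}-1=\sum_{j\ge1}T_j^{[m]}(1)\frac{t^j}{j!}$, so dividing by $t$ and writing $j=s+1$ produces the terms $T_{s+1}^{[m]}(1)/(s+1)!$; and $\frac1u=\frac{m}{e^{mt}-1}=\frac1t\sum_{i\ge0}\B_i m^i\frac{t^i}{i!}$ supplies the Bernoulli numbers $\B_{l-s}$ and the factors $m^{l-s}$. Multiplying these and extracting the coefficient of $t^l$ gives
\[[t^l]\frac{e^{u}-1}{u}=\sum_{s=0}^{l}\frac{T_{s+1}^{[m]}(1)\,\B_{l-s}\,m^{l-s}}{(s+1)!\,(l-s)!}.\]

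Finally I would form the Cauchy product of the two generating functions, extract $[t^n]$ by setting $n=(n-l)+l$, and multiply by $\frac{n!}{k!}$, obtaining
\[a_{n,k}=n!\sum_{l=0}^{n}\frac{W_{m,r}(n-l,k)}{(n-l)!}\sum_{s=0}^{l}\frac{T_{s+1}^{[m]}(1)\,\B_{l-s}\,m^{l-s}}{(s+1)!\,(l-s)!},\]
the outer sum terminating at $l=n-k$ because $W_{m,r}(n-l,k)=0$ for $n-l<k$. The one genuinely delicate point, and the main obstacle, is the bookkeeping of these two divisions by $t$ (the shift $j=s+1$ and the $1/u$) together with the factorial prefactor: I expect to verify directly, using $\frac{(n+1)!}{n+1}=n!$, that $\frac{n!}{(n-l)!\,(s+1)!\,(l-s)!}=\frac{1}{n+1}\binom{n+1}{l+1}\binom{l+1}{s+1}$, which rewrites $a_{n,k}$ into the binomial form of the statement. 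Substituting the resulting $a_{n,k}$ into $\D_n^{[m,r]}(x)=\sum_{k=0}^n a_{n,k}\B_k(x)$ then yields \eqref{berneq2}.
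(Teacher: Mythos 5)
Your proposal is correct and follows essentially the same route as the paper: both identify $a_{n,k}$ via the connection-constants formula \eqref{concoef} for the Sheffer pairs $\left((1+mt)^{-r/m},\ln(1+mt)^{1/m}\right)$ and $\left(\frac{e^t-1}{t},t\right)$, split the resulting series into the $r$-Whitney generating function \eqref{gfunW2}, the shifted Touchard series $\sum_{s\ge 0}T_{s+1}^{[m]}(1)\frac{t^s}{(s+1)!}$, and the Bernoulli series $\frac{mt}{e^{mt}-1}$, and finish with the same Cauchy product and the identity $\binom{n}{l}\binom{l}{s}\frac{1}{s+1}=\frac{1}{n+1}\binom{n+1}{l+1}\binom{l+1}{s+1}$. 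Your write-up is in fact somewhat cleaner about the exponential normalization ($\frac{n!}{k!}[t^n]$ versus the paper's EGF-coefficient convention) and avoids the typographical slips in the paper's displayed computation.
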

\begin{proof}
From  Equations \eqref{concoef}, \eqref{gfunW2} and \eqref{mTouchard} we get
\begin{align*}
a_{n,k}&=\frac{1}{k!}[t^n]\left( \frac{e^{\frac{e^m-1}{m}-1} }{\frac{e^{mt}-1}{m}} e^{rt} \cdot \left( \frac{e^{mt}-1}{m}\right)^k\right)\\
&=[t^n]\left( \frac{e^{rt}}{k!} \left(\frac{e^{mt}-1}{m}\right)^k \cdot \frac{e^{\frac{e^m-1}{m}-1}}{t} \cdot \frac{mt}{e^{mt-1}} \right)\\
&=[t^n]\left( \left(\sum_{n=k}^\infty W_{m,r}(n,k)\frac{t^n}{n!}\right) \cdot \left(\sum_{n=0}^\infty T_n^{[m]}(1)\frac{t^n}{(n+1)!} \right)\cdot \left( \sum_{n=0}^\infty m^n\B_n \frac{t^n}{n!}\right) \right)\\
&=[t^n]\left( \left(\sum_{n=k}^\infty W_{m,r}(n,k)\frac{t^n}{n!}\right) \cdot \left(\sum_{n=0}^\infty  \sum_{s=0}^l\binom ls m^{l-s}\B_{l-s}\frac{T_{s+1}^{[m]}(1)}{s+1} \frac{t^n}{n!} \right)\right)\\
&=[t^n]\left(\sum_{n=0}^\infty \left(\sum_{l=0}^{n-k} \sum_{s=0}^l \binom{n}{l} \binom ls W_{m,r}(n-l,k) m^{l-s}\B_{l-s}\frac{T_{s+1}^{[m]}(1)}{s+1} \right)\frac{t^n}{n!}\right)\\
&=[t^n]\left(\frac{1}{n+1}\sum_{n=0}^\infty \left(\sum_{l=0}^{n-k} \sum_{s=0}^l \binom{n+1}{l+1} \binom{l+1}{s+1} W_{m,r}(n-l,k) m^{l-s}\B_{l-s} T_{s+1}^{[m]}(1)\right)\frac{t^n}{n!}\right)\\
\end{align*}
Therefore, it is clear \eqref{berneq}.
\end{proof}

From a similar argument we get the following theorem.

\begin{theorem}
For $n\geq 0$ we have
\begin{align*}
\D_n^{[m,r]}(x)=\sum_{k=0}^{n}\left(\frac{1}{2}\sum_{l=0}^{n-k}\binom{n}{l}W_{m,r}(n-l,k) T_{l}^{[m]}(1) +\frac{1}{2}W_{m,r}(n,k) \right) \E_k(x).
\end{align*}
\end{theorem}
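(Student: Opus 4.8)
The plan is to mirror the proof of the preceding Bernoulli and Euler theorems and apply the connecting‑coefficient formula \eqref{concoef}. Writing $\D_n^{[m,r]}(x)=\sum_{k=0}^n a_{n,k}\E_k(x)$, I take $r_n=\D_n^{[m,r]}$, which is Sheffer for $(h,l)=\left((1+mt)^{-r/m},\ln(1+mt)^{1/m}\right)$, and $s_n=\E_n$, which is Sheffer for $(g,f)=\left(\frac{e^t+1}{2},t\right)$. By \eqref{concoef} the coefficients $a_{n,k}$ are the entries of the exponential Riordan array $\left\langle \frac{g(\overline l(t))}{h(\overline l(t))},\,f(\overline l(t))\right\rangle$, so that the $k$‑th column has exponential generating function $\tfrac1{k!}f(\overline l(t))^k\,\frac{g(\overline l(t))}{h(\overline l(t))}$.

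First I would identify $\overline l(t)$. Since $l=\overline{\mathscr{O}}$, its compositional inverse is $\overline l(t)=\mathscr{O}(t)=\frac{e^{mt}-1}{m}$. Substituting gives $f(\overline l(t))=\mathscr{O}(t)$, $h(\overline l(t))=(1+m\mathscr{O}(t))^{-r/m}=(e^{mt})^{-r/m}=e^{-rt}$, and $g(\overline l(t))=\frac{e^{\mathscr{O}(t)}+1}{2}=\tfrac12\left(\exp\mathscr{O}(t)+1\right)$. Hence $\frac{g(\overline l(t))}{h(\overline l(t))}=\frac{e^{rt}}{2}\left(\exp\mathscr{O}(t)+1\right)$, and the column generating function becomes
\[
\sum_{n}a_{n,k}\frac{t^n}{n!}=\frac1{k!}\left(\frac{e^{mt}-1}{m}\right)^k\frac{e^{rt}}{2}\left(\exp\left(\frac{e^{mt}-1}{m}\right)+1\right).
\]

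The final step is to split this product in two and recognize each factor. By \eqref{gfunW2}, $\frac{e^{rt}}{k!}\left(\frac{e^{mt}-1}{m}\right)^k=\sum_{n\ge k}W_{m,r}(n,k)\frac{t^n}{n!}$, and by \eqref{mTouchard} with $x=1$, $\exp\left(\frac{e^{mt}-1}{m}\right)=\sum_{l\ge 0}T_l^{[m]}(1)\frac{t^l}{l!}$. The piece carrying $\exp\mathscr{O}(t)$ is then the Cauchy product of these two exponential series, contributing $\tfrac12\sum_{l=0}^{n-k}\binom nl W_{m,r}(n-l,k)T_l^{[m]}(1)$ to $a_{n,k}$ (the upper limit $n-k$ arising because $W_{m,r}(n-l,k)=0$ for $n-l<k$), while the piece carrying the bare $+1$ contributes $\tfrac12 W_{m,r}(n,k)$. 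Adding the two and reading off the coefficient of $\frac{t^n}{n!}$ yields the claimed formula for $a_{n,k}$ and hence the theorem.

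The computation is routine once the substitution $\overline l(t)=\mathscr{O}(t)$ is performed; the only point requiring care—and the structural difference from the Bernoulli case—is the evaluation $g(\overline l(t))=\tfrac12(\exp\mathscr{O}(t)+1)$. There the constant $1$ inside the Euler generating function $\frac{e^t+1}{2}$ is precisely what produces the extra half‑term $\tfrac12 W_{m,r}(n,k)$ sitting outside the Touchard convolution, whereas in the Bernoulli theorem the factor $\frac{t}{e^t-1}$ instead generated a Bernoulli‑number convolution. I expect no genuine obstacle beyond this bookkeeping, since the generating‑function identities \eqref{gfunW2} and \eqref{mTouchard} supply both factors directly.
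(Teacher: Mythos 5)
Your proposal is correct and takes essentially the same approach as the paper: the paper proves this theorem only by the remark that it follows ``from a similar argument'' as the Bernoulli-polynomial theorem, and your computation via the connecting-coefficient formula \eqref{concoef}, with $\overline{l}(t)=\mathscr{O}(t)=\frac{e^{mt}-1}{m}$, is exactly that argument carried out in full. In particular, your splitting of $g(\overline{l}(t))=\tfrac{1}{2}\left(\exp\mathscr{O}(t)+1\right)$ into the $W_{m,r}$--Touchard Cauchy product plus the bare term $\tfrac{1}{2}W_{m,r}(n,k)$ is precisely how the stated coefficients arise.
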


\subsection{Some Recurrence Relations }
Rogers \cite{Rogers} introduced  the concept of the $A$-sequence. Specifically,  Rogers observed that every element $d_{n+1,k+1}$ of a Riordan matrix  (not belonging to row or column 0) could be expressed as a linear combination of the elements in the preceding row.  Merlini et al.  \cite{Merlini}  introduced the $Z$-sequence, which characterizes column 0, except for the element $d_{0,0}$. Therefore, the $A$-sequence, $Z$-sequence and the element $d_{0,0}$ completely characterize a proper Riordan array.  Summarizing, we have the following theorem.
\begin{theorem}[\cite{Merlini}] \label{Mer1}
An infinite lower triangular array $\mathcal{D}=\left[d_{n,k}\right]_{n,k\in \N}$ is a Riordan array if and only if  $d_{0,0}\neq 0$
and there are sequences $A=(a_0\neq 0, a_1, a_2, \dots)$ and  $Z=(z_0, z_1, z_2, \dots)$ such that  if $n, k\ge 0$, then
\begin{align*}
d_{n+1,k+1}&=a_0d_{n,k} + a_1d_{n,k+1} + a_2d_{n,k+2} +\cdots,  \\
d_{n+1,0}&=z_0d_{n,0} + z_1d_{n,1} + z_2d_{n,2} +\cdots,
\end{align*}
or equivalently
$$g(z)=\frac{g(0)}{1-zZ(g(z))} \quad \text{and} \quad f(z)=z(A(f(z))),$$
where $A$ and $Z$ are the generating functions of the $A$-sequence and $Z$-sequence, respectivally.
\end{theorem}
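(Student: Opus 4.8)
The plan is to work column by column with the column generating functions $d_k(z):=\sum_{n\ge0}d_{n,k}z^n$ of $\mathcal{D}$, recalling that for a Riordan array one has $d_k(z)=g(z)f(z)^k$. A preliminary remark that makes everything rigorous is that lower-triangularity ($d_{n,k}=0$ for $k>n$) forces each of the sums $a_0d_{n,k}+a_1d_{n,k+1}+\cdots$ and $z_0d_{n,0}+z_1d_{n,1}+\cdots$ to terminate, so no convergence issue arises and both displayed recurrences are honest finite relations. The proof then splits into the two implications of the ``if and only if''.

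For the forward implication (Riordan $\Rightarrow$ the $A$- and $Z$-sequences exist), I would first use that $f(0)=0$ and $f'(0)\neq0$ to get a compositional inverse $\overline{f}$, and set $A(t):=t/\overline{f}(t)$ and $Z(t):=(g(\overline{f}(t))-g(0))/(\overline{f}(t)\,g(\overline{f}(t)))$. Both are genuine formal power series: the apparent poles at $t=0$ cancel because $\overline{f}(t)$ and $g(\overline{f}(t))-g(0)$ both vanish to first order while $g(0)\neq0$, and one reads off $a_0=1/\overline{f}'(0)\neq0$. These definitions are engineered precisely so that $f(z)=zA(f(z))$ and $g(z)=g(0)/(1-zZ(f(z)))$. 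To convert these functional equations into the stated row recurrences, I would translate to generating functions: the sequence $(d_{n+1,k+1})_n$ has generating function $z^{-1}d_{k+1}(z)$ (its constant term $d_{0,k+1}$ is $0$ for $k\ge0$), whereas $\sum_j a_j d_{n,k+j}$ has generating function $\sum_j a_j d_{k+j}(z)=g(z)f(z)^kA(f(z))$; cancelling $g(z)f(z)^k$ reduces the $A$-relation to exactly $f(z)=zA(f(z))$. The $Z$-relation is identical: $(d_{n+1,0})_n$ has generating function $z^{-1}(g(z)-g(0))$ and $\sum_j z_j d_{n,j}$ has generating function $g(z)Z(f(z))$, so the $Z$-relation is equivalent to $g(z)=g(0)/(1-zZ(f(z)))$.

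For the converse (the two recurrences $\Rightarrow$ Riordan) I would run this computation backwards, but must first manufacture $f$ and $g$. Given $A$ with $a_0\neq0$, formal Lagrange inversion yields a unique $f(z)$ with $f(0)=0$ solving $f(z)=zA(f(z))$, and then $f'(0)=a_0\neq0$; next define $g(z):=d_{0,0}/(1-zZ(f(z)))$, a well-defined series with $g(0)=d_{0,0}\neq0$. The key observation is that the two recurrences together with the boundary data \emph{determine the array uniquely}: row $0$ equals $(d_{0,0},0,0,\dots)$ by triangularity, and the $Z$- and $A$-relations express every entry of row $n+1$ in terms of row $n$. Reversing the generating-function identities above shows that the array $(g(z),f(z))$ is lower triangular, has the same $(0,0)$ entry, and satisfies both recurrences (the $A$-relation because $f=zA(f)$, the $Z$-relation by the defining formula for $g$). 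Hence $\mathcal{D}$ and $(g,f)$ agree on the data that these recurrences propagate, so $\mathcal{D}=(g,f)$ is Riordan.

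The hard part will be the converse, at two points: guaranteeing that $f=zA(f)$ admits a bona fide formal-power-series solution with $f'(0)\neq0$ (this is exactly where $a_0\neq0$ enters, through Lagrange inversion), and the bookkeeping of the uniqueness-by-recurrence argument that upgrades equality of generating-function identities to equality of the two infinite matrices. The forward direction, by contrast, is a direct manipulation once $A$ and $Z$ are defined through $\overline{f}$. I would also flag that the printed generating-function form should read $Z(f(z))$ rather than $Z(g(z))$ in $g(z)=g(0)/(1-zZ(f(z)))$, since the $Z$-sequence couples column $0$ to a whole preceding row, whose column generating functions carry the factor $f(z)^j$.
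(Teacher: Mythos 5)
The paper offers no proof to compare against: Theorem \ref{Mer1} is quoted from Merlini et al.\ \cite{Merlini} and used as a black box, so any proof you give is necessarily ``a different route.'' Your argument is a correct and complete proof, and it is essentially the standard one for this characterization. The column-generating-function dictionary is exactly right: lower triangularity makes both recurrences finite; the $A$-relation for all $n,k\ge 0$ is equivalent to $f(z)=zA(f(z))$ after cancelling $g(z)f(z)^k$, and the $Z$-relation is equivalent to $g(z)\left(1-zZ(f(z))\right)=g(0)$. Your explicit formulas $A(t)=t/\overline{f}(t)$ and $Z(t)=\bigl(g(\overline{f}(t))-g(0)\bigr)/\bigl(\overline{f}(t)\,g(\overline{f}(t))\bigr)$ do define genuine formal power series with $a_0=f'(0)\neq 0$ (one small phrasing quibble: $g(\overline{f}(t))-g(0)$ vanishes to order \emph{at least} one, not necessarily exactly one, but that is all the cancellation required). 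In the converse, the two ingredients you isolate are precisely what is needed: existence and uniqueness of $f$ solving $f=zA(f)$ with $f(0)=0$, which follows from coefficient-by-coefficient recursion (Lagrange inversion is more than you need), and the observation that $d_{0,0}$ together with the two recurrences determines the entire array, so that $\mathcal{D}$ must coincide with the Riordan array $(g,f)$ you construct, which satisfies the same recurrences and boundary data by running the forward computation in reverse. Finally, your flag of the misprint is correct and worth recording: the functional equation must read $g(z)=g(0)/\bigl(1-zZ(f(z))\bigr)$; as printed, $Z(g(z))$ is not even well defined as a formal composition since $g(0)\neq 0$, and the corrected form is the one that appears in \cite{Merlini} and the one the paper actually uses later (e.g.\ in deriving the $A$-sequence recurrences for $W_{m,r}(n,k)$ and $w_{m,r}(n,k)$).
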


The generating function for the $A$-sequence of  the exponential Riordan array of the $r$-Whitney numbers is given by
$$\frac{t}{\overline{f}(t)}=\frac{t}{\ln(1+mt)^{1/m}}=\frac{mt}{\ln(1+mt)}=\sum_{k=0}^\infty c_k m^k \frac{t^k}{k!},$$
where $c_k$ are the Cauchy numbers of  first kind. They are defined by $c_n=\int_{0}^1x^{\underline n}$dx.  See  \cite{Merlini2} for general information about Cauchy numbers.
Taking in count that any exponential Riordan array $\langle g(x), f(x)\rangle=(d_{n,k})_{n,k\geq 0}$  satisfies the recurrence relations  (see \cite[Corollary 5.7]{WW})
\begin{align*}
d_{n+1,k+1}&=\sum_{j=0}^\infty\frac{n+1}{k+1}\binom{k+j}{j}j!a_jd_{n,k+j},\\
d_{n,k}-\tilde{d}_{n-1,k}&=\sum_{l=k}^n\binom{n-1}{l-1}f_{n-l+1}d_{l-1,k-1},\\
kd_{n,k}&=\sum_{l=k}^n\binom{n}{l-1}f_{n-l+1}d_{l-1,k-1},
\end{align*}
where $(a_j)$ is the $A$-sequence and $(\tilde{d}_{n,k})_{n,k\geq 0}=\langle g'(x), f(x)\rangle$.  Then   we obtain the following corollary.

\begin{corollary}
The $r$-Whitney numbers of the second kind satisfy the following recurrence relations
 \begin{align*}
 W_{m,r}(n+1,k+1)&=\sum_{j=0}^\infty\frac{n+1}{k+1}\binom{k+j}{j}c_jm^jW_{m,r}(n,k+j),\\
W_{m,r}(n,k)-rW_{m,r}(n-1,k)&=\sum_{l=k}^n\binom{n-1}{l-1}m^{n-l}W_{m,r}(l-1,k-1), \ n\geq 1,\\
kW_{m,r}(n,k)&=\sum_{l=k}^n\binom{n}{l-1}m^{n-l}W_{m,r}(l-1,k-1).
 \end{align*}
\end{corollary}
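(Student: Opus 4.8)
The plan is to specialize the three general recurrences for an exponential Riordan array $\langle g(x),f(x)\rangle$ recalled just above the statement to the array $W_2=\langle e^{rx},\tfrac{e^{mx}-1}{m}\rangle$, so that $d_{n,k}=W_{m,r}(n,k)$. All three identities then follow by identifying the relevant data: the $A$-sequence coefficients $a_j$, the Taylor coefficients $f_i$ of $f$, and the auxiliary array $\tilde d_{n,k}$.

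First I would read off the $A$-sequence. The displayed generating function gives $\sum_{j\ge0}a_jt^j=\tfrac{t}{\overline f(t)}=\tfrac{mt}{\ln(1+mt)}=\sum_{j\ge0}c_jm^j\tfrac{t^j}{j!}$, so that $a_j=c_jm^j/j!$ and hence $j!\,a_j=c_jm^j$ (note $a_0=c_0=1\neq0$, as required of an $A$-sequence). Substituting this into the first general recurrence $d_{n+1,k+1}=\sum_{j}\tfrac{n+1}{k+1}\binom{k+j}{j}j!\,a_jd_{n,k+j}$ yields the first formula at once.

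Next I would compute the Taylor coefficients of $f$. Writing $f(x)=\tfrac{e^{mx}-1}{m}=\sum_{i\ge1}m^{i-1}\tfrac{x^i}{i!}$, we read off $f_i=m^{i-1}$ for $i\ge1$ and $f_0=0$, so in particular $f_{n-l+1}=m^{n-l}$. For the auxiliary array appearing in the second recurrence I would use $g(x)=e^{rx}$, whence $g'(x)=r\,g(x)$ and therefore $(\tilde d_{n,k})=\langle g'(x),f(x)\rangle=r\langle e^{rx},f(x)\rangle$, giving $\tilde d_{n,k}=r\,W_{m,r}(n,k)$. Feeding $f_{n-l+1}=m^{n-l}$ and $\tilde d_{n-1,k}=r\,W_{m,r}(n-1,k)$ into the second and third general recurrences produces the remaining two identities.

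The only genuine care needed is bookkeeping with conventions: matching the exponential normalization of the $A$-sequence generating function against the factor $j!\,a_j$ appearing in the recurrence, and correctly extracting $f_i=m^{i-1}$ (rather than $m^i$) from $f$, which is exactly what makes the exponent $m^{n-l}$ come out right. Once these conventions are pinned down every step is a direct substitution, so I expect no substantive obstacle beyond this indexing care.
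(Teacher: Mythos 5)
Your proposal is correct and is essentially the paper's own argument: the paper derives this corollary precisely by specializing the three general recurrences of \cite[Corollary 5.7]{WW} to the exponential Riordan array $W_2=\langle e^{rx},\frac{e^{mx}-1}{m}\rangle$, using the $A$-sequence generating function $\frac{mt}{\ln(1+mt)}=\sum_j c_jm^j\frac{t^j}{j!}$ (so $j!\,a_j=c_jm^j$), the exponential coefficients $f_i=m^{i-1}$, and $\tilde d_{n,k}=rW_{m,r}(n,k)$ coming from $g'(x)=re^{rx}$. The paper leaves these substitutions implicit, and your write-up supplies exactly them, with the bookkeeping done correctly.
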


From the generating function \eqref{rWhitney1} we obtain that the $r$-Whitney numbers of the first kind are given by the exponential Riordan array
$$W_1:=[w_{m,r}(n,k)]=\left\langle(1+mz)^{-r/m}, \ln(1+mz)^{1/m}\right\rangle.$$

In this case, the generating function for the $A$-sequence of  the exponential Riordan array $W_1$  is given by
$$\frac{t}{\overline{f}(t)}=\frac{t}{\frac{e^{mt}-1}{m}}=\frac{mt}{e^{mt}-1}=\sum_{k=0}^\infty \B_k m^k \frac{t^k}{k!},$$
where $\B_n$ are the Bernoulli numbers.

Therefore  we obtain the following corollary.
\begin{corollary}
The $r$-Whitney numbers of thefirst kind satisfy the following recurrence relations
 \begin{align*}
 w_{m,r}(n+1,k+1)&=\sum_{j=0}^\infty\frac{n+1}{k+1}\binom{k+j}{j}\B_jm^jw_{m,r}(n,k+j),\\
w_{m,r}(n,k)+r\sum_{l=0}^{n-1}&\binom{n-1}{l}(n-l-1)!w_{m,r}(l,k)(-m)^{n-l-1}\\&=\sum_{l=k}^n\binom{n-1}{l-1}(-m)^{n-l}(n-l)!w_{m,r}(l-1,k-1), \ n\geq 1,\\
kw_{m,r}(n,k)&=\sum_{l=k}^n\binom{n}{l-1}(-m)^{n-l}(n-l)!w_{m,r}(l-1,k-1).
 \end{align*}

\end{corollary}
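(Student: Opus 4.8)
The plan is to specialize the three general recurrence relations for exponential Riordan arrays quoted immediately before the statement (from \cite[Corollary 5.7]{WW}) to the array $W_1=\langle g(z),f(z)\rangle$ with $g(z)=(1+mz)^{-r/m}$ and $f(z)=\frac1m\ln(1+mz)$, so that $d_{n,k}=w_{m,r}(n,k)$. Each of the three displayed identities in the corollary comes from one of the three general recurrences, the only work being to read off the relevant data: the $A$-sequence coefficients $a_j$, the Taylor coefficients $f_k$ of $f$, and the auxiliary array $\widetilde d_{n,k}=\langle g'(z),f(z)\rangle$.

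For the first identity I would use the $A$-sequence generating function computed just above, namely $A(t)=\sum_j a_j t^j=\frac{mt}{e^{mt}-1}=\sum_{j\ge0}\B_j m^j\frac{t^j}{j!}$. Since the general recurrence is written in terms of $j!\,a_j$, reading off coefficients gives $j!\,a_j=\B_j m^j$, and substituting this into the first general recurrence yields exactly the stated formula for $w_{m,r}(n+1,k+1)$. For the third identity I would expand $f(z)=\frac1m\ln(1+mz)=\sum_{k\ge1}(-1)^{k-1}\frac{m^{k-1}}{k}z^k$ and write $f(z)=\sum_k f_k z^k/k!$ to obtain $f_k=(-1)^{k-1}m^{k-1}(k-1)!$; hence $f_{n-l+1}=(-m)^{n-l}(n-l)!$, which inserted into the third general recurrence produces the last displayed identity.

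The middle identity needs the extra ingredient $\widetilde d_{n,k}=\langle g'(z),f(z)\rangle$. I would compute $g'(z)=-r(1+mz)^{-r/m-1}$ and factor it as $g'(z)=\frac{-r}{1+mz}\,g(z)$. Expanding $\frac{-r}{1+mz}=-r\sum_{j\ge0}(-m)^j z^j$, whose coefficient of $z^j/j!$ is $-r(-m)^j j!$, and taking the binomial (Cauchy) product against $g(z)f(z)^k/k!=\sum_l w_{m,r}(l,k)z^l/l!$, I obtain
\[
\widetilde d_{n,k}=-r\sum_{l=0}^n\binom nl(-m)^{n-l}(n-l)!\,w_{m,r}(l,k).
\]
Substituting $\widetilde d_{n-1,k}$ together with $f_{n-l+1}=(-m)^{n-l}(n-l)!$ into the second general recurrence and moving the $\widetilde d_{n-1,k}$ term to the left-hand side yields, after relabeling, the middle identity of the corollary.

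The routine parts are the first and third identities, which are pure substitution once $A(t)$ and the $f_k$ are known. The only genuine step is the middle one: recognizing that the logarithmic derivative $g'/g$ equals $-r/(1+mz)$ so that $\widetilde d_{n,k}$ is a binomial convolution of that geometric-type series with the first-kind $r$-Whitney numbers. That convolution is where I expect the main bookkeeping, but it is elementary.
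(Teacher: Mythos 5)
Your proposal is correct and follows exactly the paper's route: specialize the three recurrences of \cite[Corollary 5.7]{WW} to $W_1=\left\langle (1+mz)^{-r/m},\ln(1+mz)^{1/m}\right\rangle$, using the $A$-sequence generating function $\frac{mt}{e^{mt}-1}=\sum_{k\ge0}\B_k m^k\frac{t^k}{k!}$ and the coefficients $f_{k}=(-m)^{k-1}(k-1)!$ of $\frac1m\ln(1+mz)$. The only difference is that you make explicit the computation of $\widetilde{d}_{n,k}=\langle g'(z),f(z)\rangle$ via $g'/g=-r/(1+mz)$, which the paper leaves implicit; your formula for it (and hence the middle identity) checks out.
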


\subsection{Determinantal Identity}
The $r$-Whitney numbers of both kinds satisfy  the following orthogonality relation (cf. \cite{Mezo}):
$$\sum_{i=s}^nW_{m,r}(n,i)w_{m,r}(i,s)=\sum_{i=s}^nw_{m,r}(n,i)W_{m,r}(i,s)=\delta_{s,n},$$
where $\delta_{s,n}=1$ if $s=n$ and 0 otherwise.  From above relations we obtain the inverse relation:
$$f_n=\sum_{s=0}^nw_{m,r}(n,s)g_s  \iff g_n=\sum_{s=0}^nW_{m,r}(n,s)f_s.$$
Moreover, we have the identity $W_1=W_2^{-1}$,  where $W_2$ is the  exponential Riordan array for the  $r$-Whitney  numbers of the second kind.

From definition of the $r$-Dowling polynomials we obtain the following equality
$$W_2\cdot X = D_{m,r},$$
where $X=[1, x, x^2, \dots]^T$ and $D_{m,r}=[\D_0^{[m,r]}(x), \D_1^{[m,r]}(x), \D_2^{[m,r]}(x), \dots]^T$. Then  $X=W_1D_{m,r}$ and
$$x^n=\sum_{k=0}^n w_{m,r}(n,k)\D_k^{[m,r]}(x).$$
 Therefore
 \begin{align}
\D_n^{[m,r]}(x)=x^n-\sum_{k=0}^{n-1}w_{m,r}(n,k)\D_k^{[m,r]}(x), \quad n\geq 0.\label{iddet}
 \end{align}
 From the above equation we obtain the following determinantal identity.
 \begin{theorem}
 The  $r$-Dowling polynomials polynomials satisfy
 $$\D_n^{[m,r]}(x)=(-1)^n\begin{vmatrix}
 1 & x & & \cdots && x^{n-1} & x^n\\
  1 & w_{m,r}(1,0) & &\cdots && w_{m,r}(n-1,0) & w_{m,r}(n,0)\\
    0 & 1 && \cdots && w_{m,r}(n-1,1) & w_{m,r}(n,1)\\
   \vdots &  & & \cdots& & & \vdots\\
  0 & 0  && \cdots & & 1 & w_{m,r}(n,n-1)\\
 \end{vmatrix}$$
 \end{theorem}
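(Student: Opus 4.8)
The plan is to read the stated identity as the solution, via Cramer's rule, of the triangular linear system that expresses the powers $x^k$ in terms of the $r$-Dowling polynomials. The starting point is the relation established just above, $x^n=\sum_{k=0}^n w_{m,r}(n,k)\D_k^{[m,r]}(x)$, which for $k=0,1,\dots,n$ reads as the system $\sum_{j=0}^{n} w_{m,r}(k,j)\D_j^{[m,r]}(x)=x^k$ in the unknowns $\D_0^{[m,r]}(x),\dots,\D_n^{[m,r]}(x)$ (here $w_{m,r}(k,j)=0$ for $j>k$). Its coefficient matrix is the leading $(n+1)\times(n+1)$ block of $W_1=[w_{m,r}(k,j)]$, which is lower triangular; since $W_1=\langle(1+mz)^{-r/m},\ln(1+mz)^{1/m}\rangle$ has $g(0)=1$ and $f'(0)=1$, its diagonal entries $w_{m,r}(k,k)$ all equal $1$, so this block has determinant $1$.

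First I would apply Cramer's rule. Because the coefficient determinant equals $1$, solving for the last unknown gives simply $\D_n^{[m,r]}(x)=\det W^{(n)}$, where $W^{(n)}$ is obtained from the coefficient block by replacing its last column (column $n$) with the right-hand side vector $(x^0,x^1,\dots,x^n)^{T}$. Thus $W^{(n)}$ carries the entry $w_{m,r}(k,j)$ in position $(k,j)$ for $j\le n-1$, and the entry $x^k$ in position $(k,n)$.

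The remaining step is to match $W^{(n)}$ with the matrix $M$ displayed in the theorem. Transposing $M$, one sees that $M^{T}$ has first column $(x^0,\dots,x^n)^{T}$ and, for $i=1,\dots,n$, its $i$-th column equal to the $(i-1)$-th column of $W_1$, namely $(w_{m,r}(0,i-1),\dots,w_{m,r}(n,i-1))^{T}$. Hence $W^{(n)}$ is exactly $M^{T}$ with its first column cyclically moved to the last position; this is an $(n+1)$-cycle of the columns, of sign $(-1)^{n}$, so $\det W^{(n)}=(-1)^{n}\det M^{T}=(-1)^{n}\det M$. Combining the two computations yields $\D_n^{[m,r]}(x)=(-1)^{n}\det M$, as claimed.

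I expect the only delicate point to be bookkeeping: confirming that the diagonal of $W_1$ is all ones (so the system is unisolvent with coefficient determinant $1$), and correctly identifying the column permutation carrying $M^{T}$ to $W^{(n)}$ together with the sign $(-1)^{n}$ it produces. An alternative, more computational route is induction on $n$, expanding $\det M$ along its first column (whose only nonzero entries are the two $1$'s in rows $0$ and $1$) and feeding the recurrence \eqref{iddet} into the two resulting cofactors; since that bookkeeping is heavier, I would present the Cramer's-rule argument.
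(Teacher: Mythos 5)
Your argument is correct, but it takes a genuinely different route from the paper's proof --- in fact the paper's (very terse) proof is exactly the alternative you set aside at the end. The paper derives the identity from the recurrence \eqref{iddet}, $\D_n^{[m,r]}(x)=x^n-\sum_{k=0}^{n-1}w_{m,r}(n,k)\D_k^{[m,r]}(x)$, by expanding the displayed determinant along its last column: the cofactor of $x^n$ is a unit-diagonal triangular determinant equal to $1$, the cofactor of $w_{m,r}(n,k)$ factors by block triangularity into the $(k+1)\times(k+1)$ determinant of the same shape times a triangular determinant equal to $1$, and an induction on $n$ closes the argument. Your proof instead reads the relations $x^k=\sum_{j}w_{m,r}(k,j)\D_j^{[m,r]}(x)$, $k=0,\dots,n$, as a unit lower triangular linear system and applies Cramer's rule, then identifies the Cramer numerator with $M^{T}$ up to an $(n+1)$-cycle of the columns, of sign $(-1)^n$. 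Both proofs rest on the same ingredient established just before the theorem, the inverse relation $x^n=\sum_{k}w_{m,r}(n,k)\D_k^{[m,r]}(x)$ coming from $W_1=W_2^{-1}$; what yours buys is a global, induction-free derivation in which the sign $(-1)^n$ has a transparent origin as the signature of a cycle, at the modest price of verifying that the diagonal of $W_1$ is all ones (which you justify correctly from $g(0)=1$ and $f'(0)=1$) and of the column-permutation bookkeeping, whereas the paper's route needs no statement about the coefficient matrix but leaves the induction and the identification of the minors implicit.
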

 \begin{proof}
This identity follows from Equation \eqref{iddet} and by expanding the determinant by the last column.
 \end{proof}


\begin{thebibliography}{99}

\bibitem{Ben2}
M.~Benoumhani. On some numbers related to Whitney numbers of Dowling lattices. Adv. Appl. Math. \textbf{19}(1997), 106--116.

\bibitem{Ben}
M.~Benoumhani. On Whitney numbers of Dowling lattices. Discrete Math. \textbf{159}(1996), 13--33.

\bibitem{BLL}
F.~Bergeron, G.~Labelle, P.~Leroux. Combinatorial Species and Tree-Like Structures. Encyclopedia of Mathematic and its Applications. Volume 67. Cambridge University Press, Cambridge, 1998.

\bibitem{Broder}
A.~Z.~Broder. The $r$-Stirling numbers. Discrete Math. \textbf{49}(1984), 241--259.

\bibitem{CMM} D.~Callan, S.-M.~Ma, T.~Mansour. Some combinatorial arrays related to the Lotka-Volterra system. Electron. J. Combin. \textbf{22}(2) (2015), \# P2.22.

\bibitem{Chen} W.~Y.~C.~Chen. Context-Free grammars, differential operators and formal power series. Theoret. Comput. Sci. \textbf{117}(1993), 113--129.

\bibitem{CF} W.~Y.~C.~Chen, A.~M.~Fu. Context-free grammars for permutations and increasing trees.  Adv. Appl. Math. \textbf{82}(2017), 58--82.

\bibitem{CJ}
G.-S.~Cheon, J.-H.~Jung.  $r$-Whitney numbers of Dowling lattices. Discrete Math. \textbf{312}(15)(2012), 2337--2348.

\bibitem{CC3} C.~B.~Corcino, R.~B.~Corcino, R.~J.~Gasparin.  Equivalent asymptotic formulas of second kind $r$-Whitney numbers.  Integral Transforms Spec. Funct.  \textbf{26}(3)(2015), 192--202.

\bibitem{Dowling} T.~A.~Dowling. A class of geometric lattices based on finite groups. J. Combin. Theory Ser. B \textbf{14}(1973) 61--86. erratum; J. Combin. Theory Ser. B \textbf{15}(1973), 211.

\bibitem{HWY} R.~Hao, L.~Wang, H.~Yang. Context-free grammars for triangular arrays.  Acta Math. Sin. (Engl. Ser.) \textbf{31}(3)(2015), 445--455.

\bibitem{HM} O.~Herscovici, T.~Mansour. Identities involving Touchard polynomials derived from umbral calculus.   Adv. Stud. Contemp. Math. (Kyungshang) \textbf{25}(1)(2015), 39--46.

\bibitem{Joyal} A.~Joyal.  Une th\'eorie combinatoire des s\'eries formelles. Adv. Math.  \textbf{42}(1981), 1--82.

\bibitem{LV}
P.~Leroux, G.~X.~Viennot. Combinatorial resolution of systems of differential equations, I. Ordinary differential equations. In Combinatoire Enum\'erativ\'e (1986), G. Labelle, P. Leroux, Eds., no. 1234 in Lecture Notes in Mathematics, Springer-Verlag, 210--245.

\bibitem{Ma1}
S.-M.~Ma. Some combinatorial arrays generated by  context-free grammars. European J. Combin.  \textbf{34}(2013), 1081--1091.

\bibitem{Ma1a}
S.-M.~Ma. Derivative polynomials and enumeration of permutations by number of interior and left peaks. Discrete Math. \textbf{312}(2012), 405--412.

\bibitem{Ma2}
S.-M.~Ma, T.~Mansour, M.~Schork. Normal ordering problem and the extensions of the Stirling grammar. Russ. J. Math. Phys.   \textbf{21}(2)(2014), 242--255.

\bibitem{MRS} T.~Mansour, J.~L.~Ram\'irez, M.~Shattuck. A generalization of the $r$-Whitney numbers of the second kind. J. Comb. \textbf{8}(1)(2017) 29--55.

\bibitem{difmend} M.~M\'endez. Combinatorial differential operators in: Faà di Bruno formula, enumeration of ballot paths, enriched rooted trees and increasing rooted trees. ArXiv preprint, available online at  http://arxiv.org/pdf/1610.03602v1.pdf


\bibitem{Merca}
M.~Merca. A convolution for the complete and elementary symmetric functions. Aequationes Math. \textbf{86}(3) (2013), 217--229.

\bibitem{Merca3}
M.~Merca. A note on the $r$-Whitney numbers of Dowling lattices. C. R. Math. Acad. Sci. Paris.  \textbf{351}(16-17)(2013), 649--655.

\bibitem{Merlini} D.~Merlini, D.~G.~Rogers, R.~Sprugnoli, M.~Cecilia Verri. On some alternative characterizations of Riordan arrays. Canadian J. Math. {\bf 49}(1997), 301--320.

\bibitem{Merlini2}
D.~Merlini, R.~Sprugnoli, M.~C.~Verri. The Cauchy numbers. Discrete Math. \textbf{306}(2006), 1906--1920.

\bibitem{Mezo}
I.~Mez\H{o}. A new formula for the Bernoulli polynomials. Result. Math. \textbf{58}(3)(2010), 329--335.

\bibitem{Ram}
I.~Mez\H{o}, J.~L.~Ram\'irez. The linear algebra of the $r$-Whitney matrices. Integral Transforms Spec. Funct.  \textbf{26}(3)(2015), 213--225.

\bibitem{Ram2}
I.~Mez\H{o}, J.~L.~Ram\'irez. Some identities of the $r$-Whitney numbers. Aequationes Math. \textbf{90}(2) (2016), 393--406.

\bibitem{MR}
M.~Mihoubi, M.~Rahmani. The partial $r$-Bell polynomials. ArXiv preprint, available online at  http://arxiv.org/pdf/1308.0863v1.pdf.

\bibitem{MT}
M.~Mihoubi,  M.~Tiachachat.  Some applications of the $r$-Whitney numbers. C. R. Math. Acad. Sci. Paris.   \textbf{352}(12)(2014), 965--969.

\bibitem{Rogers} D.~G.~Rogers. Pascal triangles, Catalan numbers and renewal arrays. Discrete Math. {\bf 22} (1978), 301--310.

\bibitem{RomanB} S.~Roman. The Umbral Calculus.  Pure and Applied Mathematics, 111.  Academic Press Inc. (1984).

\bibitem{Rota} S.~Roman, G.-C.~Rota. The Umbral Calculus. Adv. Math.  {\bf 27}(1978), 95--188.

\bibitem{Riordan} L.~W.~Shapiro, S.~Getu, W.~Woan, L.~Woodson. The Riordan group. Discrete Appl. Math. {\bf 34}(1991), 229--239.

\bibitem{S} M.~Z.~Spivey. A generalized recurrence for Bell numbers. J. Integer Seq. \textbf{11}(2008), Article 08.2.5.

\bibitem{WW} W.~Wang, T.~Wang. Generalized Riordan arrays. Discrete Math. \textbf{308}(2008), 6466--6500.

\bibitem{Xu} A.~Xu. Extensions of Spivey's Bell number formula. Electron. J. Combin. \textbf{19}(2), \#P6(2012).

\bibitem{Xu2} A.~Xu, T.~Zhou. Some identities related to the $r$-Whitney numbers.  Integral Transforms Spec. Funct. \textbf{27}(11)(2016), 920--929.

\end{thebibliography}
\end{document}